\newlength{\mytextsize}
\def\MyNewTheorem#1[#2]#3{%
  \newaliascnt{#1}{#2}
  \newtheorem{#1}[#1]{#3}
  \aliascntresetthe{#1}
  \expandafter\newcommand\csname #1autorefname\endcsname{#3}
}
\newtheorem{theorem}{Theorem}
\newtheorem*{thm*}{Theorem}
\newtheorem*{rep@theorem}{\rep@title}
\newcommand{\newreptheorem}[2]{%
\newenvironment{rep#1}[1]{%
 \def\rep@title{#2 \ref{##1}}%
 \begin{rep@theorem}}%
 {\end{rep@theorem}}}
\def\equationautorefname~#1\null{(#1)\null}
\def\itemautorefname~#1\null{#1\null}
\newcommand{\Z}{{\ensuremath{\mathbb{Z}}}}
\newcommand{\M}{{\ensuremath{\mathscr{M}}}}
\newcommand{\Conc}{{\ensuremath{\mathcal{C}}}}
\newcommand{\rst}[1]{\ensuremath{{\big|} \raise-1.25ex\hbox{$\scriptscriptstyle#1$}}}
\newcommand{\bigsharpp}{\mathop{\mathchoice
  {\vcenter{\hbox{\LARGE$\#$}}}
  {\vcenter{\hbox{\large$\#$}}}
  {\vcenter{\hbox{\footnotesize$\#$}}}
  {\vcenter{\hbox{\scriptsize$\#$}}}
}\displaylimits}
\newcommand{\union}[1]{\underset{\raisebox{-0.125ex}[0pt][0pt]{\text{\scriptsize{$#1$} }}}{\raisebox{0.25ex}[0pt][0pt]{\text{ $\cup$ }}}}
\title{Independence of Satellites of Torus Knots in the Smooth Concordance Group}
\author{Juanita Pinz\'on-Caicedo}
\address{Department of Mathematics, University of Georgia,
Athens, GA \ 30605}
\email{jpinzon@uga.edu}
\date{}
\begin{document}
\begin{abstract} The main goal of this article is to obtain a condition under which an infinite collection $\mathscr{F}$ of satellite knots (with companion a positive torus knot and pattern similar to the Whitehead link) freely generates a subgroup of infinite rank in the smooth concordance group. This goal is attained by examining both the instanton moduli space over a 4-manifold with tubular ends and the corresponding Chern-Simons invariant of the adequate 3-dimensional portion  of the 4-manifold. More specifically, the result is derived from Furuta's criterion for the independence of Seifert fibred homology spheres in the homology cobordism group of oriented homology 3-spheres. Indeed, we first associate to $\mathscr{F}$ the corresponding collection of 2-fold covers of the 3-sphere branched over the elements of $\mathscr{F}$ and then introduce definite cobordisms from the aforementioned covers of the satellites to a number of Seifert fibered homology spheres. This allows us to apply Furuta's criterion and thus obtain a condition that guarantees the independence of the family $\mathscr{F}$ in the smooth concordance group.
\end{abstract}

\maketitle
\section{Introduction} 
A knot is a smooth embedding of $S^1$ into $S^3$.
Two knots $K_0$ and $K_1$ are said to be smoothly concordant if there is a
smooth embedding of $S^1\times [0,1]$ into $S^3\times [0,1]$ that
restricts to the given knots at each end. Requiring such an embedding to be locally flat instead of smooth gives rise to the weaker notion of topological concordance.  Both kinds of concordance are equivalence relations, and the sets of smooth and topological concordance classes of knots,
denoted by $\mathcal{C}_\infty$ and $\mathcal{C}_{\mbox{\tiny TOP}}$ respectively, are abelian groups with connected sum as their binary operation. In both cases the identity element is the concordance class of the unknot and the knots in that class are known respectively as smoothly slice and topologically slice. The algebraic structure of $\mathcal{C}_\infty$ and $\mathcal{C}_{\mbox{\tiny TOP}}$ is a much studied object in low-dimensional topology, as is the concordance class of the unknot. Identifying the set of knots that are topologically slice but not smoothly slice is an intriguing topic, among other reasons because these knots reveal subtle properties of differentiable structures in dimension four \cite[p. 522]{gompf-stip}.\\

One way to approach the subject is by studying the effect of special kinds of satellite operations on concordance. To define a satellite operation we start with a given knot $B$, embedded in an unkotted solid torus $V\subseteq S^3$, and a second knot $K\subseteq S^3$. The satellite knot with pattern $B\subseteq V$ and companion $K$ is denoted by $B(K)$ and is obtained as the image of $B$ under the embedding of $V$ in $S^3$ that knots $V$ as a tubular neighborhood of $K$. Freedman's theorem  \cite{freedman, freedman2, freedman-quinn} implies that if the pattern $B$ is an unknot in $S^3$ and is trivial in $H_1(V;\Z)$, then the satellite $B(K)$ is topologically slice, that is, it maps to zero in $\mathcal{C}_{\mbox{\tiny TOP}}$.\\

Whitehead doubles are an important example of such satellites and are obtained by using the Whitehead link (\autoref{Whitehead}) as the pattern of the operation. Similar examples arise by considering Whitehead-like patterns $D_n$ (\autoref{fig::D_n}). Because the knot $D_n$ is trivial in $S^3$, every satellite knot with pattern $D_n$ is topologically slice, and no classical invariant captures information about their smooth concordance type. Thus, smooth techniques like gauge theory are necessary to obtain that information. In this article we use the theory of $SO(3)$ instantons to establish an obstruction for a family of Whitehead-like satellites of positive torus knots to be independent in the smooth concordance group. The main result is the following:

\begin{reptheorem}{main_result}
Let $\left\{\left(p_i,q_i\right)\right\}_i$ be a sequence of relatively prime positive integers and $n_i$ a positive and even integer $(i=1,2,\ldots)$. Then, if $$p_iq_i(2n_ip_iq_i-1)<p_{i+1}q_{i+1}(n_{i+1}p_{i+1}q_{i+1}-1),$$ the collection $\left\{D_{n_i}\left(T_{p_iq_i}\right)\right\}_{i=1}^\infty$ is an independent family in $\Conc_\infty$.
\end{reptheorem}

It is important to mention that the case $n_i=2$ is a result of Hedden-Kirk \cite{hedden-kirk} and the previous theorem is a generalization of their work.\\

The proof of \autoref{main_result} is based on a technique pioneered by Akbulut \cite{akbulut} and later expanded by Cochran-Gompf \cite{cochran-gompf} among others. The starting point of Akbulut's technique is to assign to each satellite knot $D_n(T_{p,q})$ the 2-fold cover of $S^3$ branched over the knot $D_n(T_{p,q})$, since an obstruction to the cover from bounding results in an obstruction to $D_n(T_{p,q})$ from being slice. The next step is to construct a negative definite cobordism $W$ from the 2-fold cover $\Sigma=\Sigma_2\left(D_n(T_{p,q})\right)$ to the Seifert fibered homology sphere $\Sigma(2,3,5)$ and then glue $W$ to the negative definite 4-manifold $E_8$ along their common boundary. The last step is to notice that if $\Sigma$ bounded a $\Z/2$--homology 4-ball $Q$, then the manifold $X=Q\cup W\cup E_8$ would be a closed 4-manifold with negative definite intersection form given by $m\langle -1\rangle\oplus E_8$ for some integer $m>0$. However, Donaldson's Diagonalization Theorem prevents the existence of such manifold thus showing that $Q$ cannot exist and that $D_n(T_{p,q})$ is not smoothly slice. In fact, the technique is more powerful than that: it can be used to prove that $D_n(T_{p,q})$ has infinite order in the smooth concordance group $\Conc_\infty$. Nevertheless, Donaldson's theorem is not powerful enough to prove independence of infinite families. In that case, it is necessary to use a generalization of the following theorem:

\begin{thm*}[Furuta \cite{furuta}, see also \cite{FS-inst}] Let $R(p,q,r)$ be the Fintushel-Stern invariant for $\Sigma(p,q,r)$ and suppose that a sequence $\Sigma_i = \Sigma(p_i,q_i,r_i)$ $(i = 1, 2\ldots )$ satisfies that $R(p_i,q_i,r_i ) > 0$ $(i = 1, 2\ldots)$. Then, if $$p_iq_ir_i<p_{i+1}q_{i+1}r_{i+1}, $$ the homology classes $[\Sigma_i]$ $(i = 1, 2\ldots)$ are linearly independent over $\Z$ in $\Theta^3_{\Z}$.
\end{thm*}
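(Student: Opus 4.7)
The plan is to argue by contradiction using $SO(3)$ instanton moduli spaces on a 4-manifold with tubular ends, following the technique that Fintushel--Stern developed for the instanton Floer homology of Brieskorn spheres. Suppose for contradiction that some nontrivial $\Z$-linear combination $\sum_{i=1}^n a_i [\Sigma_i] = 0$ holds in $\Theta^3_{\Z}$ with $a_n\neq 0$. By definition this furnishes a compact oriented 4-manifold $W$ with $H_*(W;\Z) = H_*(\mathrm{pt};\Z)$ whose boundary is a disjoint union of copies of $\pm\Sigma_i$ with multiplicities $|a_i|$. For each $i<n$, the Brieskorn sphere $\Sigma_i$ is the oriented boundary of the standard negative definite Seifert plumbing 4-manifold $P_i$, and I cap off the $\Sigma_i$--ends of $W$ with the appropriate copies of $\pm P_i$. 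The resulting 4-manifold $X$ has $b_1 = 0$, is negative definite away from the $W$--portion, and has boundary consisting only of copies of $\pm\Sigma_n$; after attaching tubular ends, $X$ becomes the standard arena for the instanton argument.

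On each Brieskorn sphere $\Sigma(p,q,r)$ the irreducible flat $SU(2)$ connections are in explicit bijection with triples of rotation numbers for the Seifert fibration, and their Chern--Simons values lie in $(4pqr)^{-1}\Z/\Z$. The hypothesis $R(p_n,q_n,r_n)>0$ is precisely the Fintushel--Stern positivity condition guaranteeing an irreducible flat connection $\alpha$ on $\Sigma_n$ whose ASD moduli space on $X$ has expected dimension exactly one. The ordering hypothesis $p_iq_ir_i < p_nq_nr_n$ for $i<n$ then forces two strict inequalities: first, the Yang--Mills action of $\alpha$ is less than $1$, so no full instanton can bubble off; second, $cs(\alpha)$ is strictly smaller than every positive Chern--Simons value of an irreducible flat connection on any $\Sigma_i$ with $i<n$, so no energy can escape down a plumbing end to an irreducible flat limit. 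Consequently, the moduli space $\M(X,\alpha)$ of finite-energy ASD $SO(3)$ connections asymptotic to $\alpha$ on the $\Sigma_n$--ends and to the trivial connection on the plumbing ends is a compact one-manifold with boundary.

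A compact $1$-manifold has an even number of boundary points, but a careful analysis of the reducible locus near the trivial connection on $X$, using the negative definiteness of the plumbing portion together with Fintushel--Stern's explicit description of flat connections on Seifert fibered homology spheres, shows that the boundary of $\M(X,\alpha)$ contains an odd number of points. The resulting parity contradiction rules out the assumed linear relation and proves the theorem. The main obstacle will be the index and orientation bookkeeping: expressing $\dim\M(X,\alpha)$ through the index theorem for manifolds with cylindrical ends, verifying that $R(p_n,q_n,r_n)>0$ is exactly the positivity condition that makes this formal dimension equal to one, and pinning down the parity of the reducible contribution to the boundary of the Uhlenbeck compactification.
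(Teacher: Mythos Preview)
Your overall architecture (linear relation $\Rightarrow$ homology cobordism $\Rightarrow$ compact 1-dimensional instanton moduli space with odd boundary $\Rightarrow$ contradiction) matches the paper's, but the way you build the 4-manifold and the bundle diverges from what the paper (following Furuta and Hedden--Kirk) actually does, and your version has a real gap.

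\textbf{What the paper does.} The paper does \emph{not} cap off the $\Sigma_i$ for $i<N$ with plumbings. Starting from a negative definite $X$ whose boundary is the disjoint union of all the $\Sigma_i$, it glues to \emph{one} copy of $\Sigma_N$ the truncated mapping cylinder $W=W(p_N,q_N,r_N)$ of the Seifert projection $\Sigma_N\to S^2$. All the remaining $\Sigma_i$ (and the three lens spaces coming from $\partial W$) stay on as cylindrical ends. The $SO(3)$ bundle is the one determined by the preferred generator $e\in H^2(W;\Z)$, extended trivially; the limiting flat connection is the tuple $\boldsymbol\alpha$ built from $e$ on the lens-space ends and the \emph{trivial} connection on every $\Sigma_i$-end. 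The paper never chooses an irreducible flat connection on $\Sigma_N$. Compactness comes from $p_1(E,\boldsymbol\alpha)=1/(p_Nq_Nr_N)$ being smaller than $\tau(\Sigma_i)=1/(p_iq_ir_i)$ for $i<N$, and the odd count of reducibles is the explicit formula $C(e)=T/2^\beta$.

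\textbf{Where your argument breaks.} Your step ``cap off the $\Sigma_i$-ends with $\pm P_i$'' destroys the definiteness you need. The negative definite plumbing $P_i$ has $\partial P_i=\Sigma_i$; to fill a boundary component carrying $+\Sigma_i$ you must glue $-P_i$, which is \emph{positive} definite. So your $X$ is in general indefinite, and the sentence ``$X$ \ldots is negative definite away from the $W$-portion'' is false. Once $b_2^+(X)>0$ both pillars collapse: the formal dimension picks up a $b_2^+$ term (so $R>0$ no longer forces $\dim\M=1$), and generic-metric arguments kill reducibles rather than leaving an odd number of them. There is also a bookkeeping issue you do not address: after capping you may still have several $\pm\Sigma_n$-ends, and imposing the same irreducible limit $\alpha$ on each multiplies its index contribution, so ``expected dimension exactly one'' is not automatic. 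Finally, your reference to ``energy escaping down a plumbing end'' is inconsistent with having capped those ends off. The paper's construction avoids all of this by never capping, keeping $X$ negative definite, and reading both the dimension and the reducible count directly off the class $e$.
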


In a manner similar to Akbulut's technique, the gauge theoretical result cannot be applied directly; in both cases it is necessary to first construct definite cobordisms from the 2-fold cover $\Sigma_2\left(D_n(T_{p,q})\right)$ to Seifert fibered homology spheres and then apply Furuta's criterion for independence. This approach was used by Hedden-Kirk \cite{hedden-kirk} to establish conditions under which an infinite family of Whitehead doubles of positive torus knots is independent in $\mathcal{C}_\infty$. Nonetheless, their proof involves a complicated computation of bounds for the minimal Chern-Simons invariant of $\Sigma_2\left(D_2\left(T_{p,q}\right)\right)$ and this can be sidestepped by introducing definite cobordisms from $\Sigma_2\left(D_2\left(T_{p,q}\right)\right)$ to Seifert fibred homology 3-spheres. In this article we recover their result and generalize it to include more examples of satellite operations.\\

{\bfseries Outline:} In \autoref{sec::satellites} we offer a brief description of satellite operations and present the important patterns. In \autoref{sec::gauge} we review the theory of $SO(3)$ instantons and the homology cobordism obstruction that derives from it. Then, in \autoref{sec::covers} we explore the topology of the 2-fold covers to later introduce the construction of the relevant cobordisms in \autoref{sec::cobordisms}. Finally, in \autoref{sec::main_result} we prove the main result.\\

{\bfseries Acknowledgements:} The results in this paper originally formed the core of my PhD dissertation. I would like to thank my advisor Prof. Paul Kirk for his patient guidance and continual encouragement throughout my studies. I would also like to thank Prof. Charles Livingston for his careful reading of my dissertation and his helpful comments.

\section{Patterns and Satellite Knots}\label{sec::satellites}
As was mentioned, the main goal of this article is to show independence of families of satellite knots in the smooth concordance group. This is done by considering satellite operations with pattern similar to the Whitehead link and companion a positive torus knot. In this section we describe the patterns of the relevant satellite operations. 

\begin{definition}\label{def:satellite}
Let $B\sqcup A$ be a 2-component link in $S^3$ such that $A$ is an unknot so that $V=S^3\setminus N(A)$ is an unknotted solid torus in $S^3$. For $K$ any knot, consider $h:V\to S^3$ an orientation preserving embedding taking $V$ to a tubular neighborhood of $K$ in such a way that a longitude of $V$ (which is a meridian of $A$) is sent to a longitude of $K$, then $h(B)$ is the untwisted satellite knot with pattern $B\sqcup A$ and companion $K$ and is usually denoted $B(K)$.
\end{definition}

A notable example of a satellite operation is provided by using the Whitehead link (\autoref{Whitehead}) as the pattern of the operation. The knots obtained in this way are called Whitehead doubles. The following figures show the pattern, companion, and satellite whenever we take the pattern $B\sqcup A$ to be the Whitehead link and the companion knot to be the right handed trefoil, $T_{2,3}$.

\begin{figure}[H]
\medskip
\begin{subfigure}[b]{0.3\textwidth}
\centering
\includegraphics[width=0.5\textwidth, height=0.125\textheight]{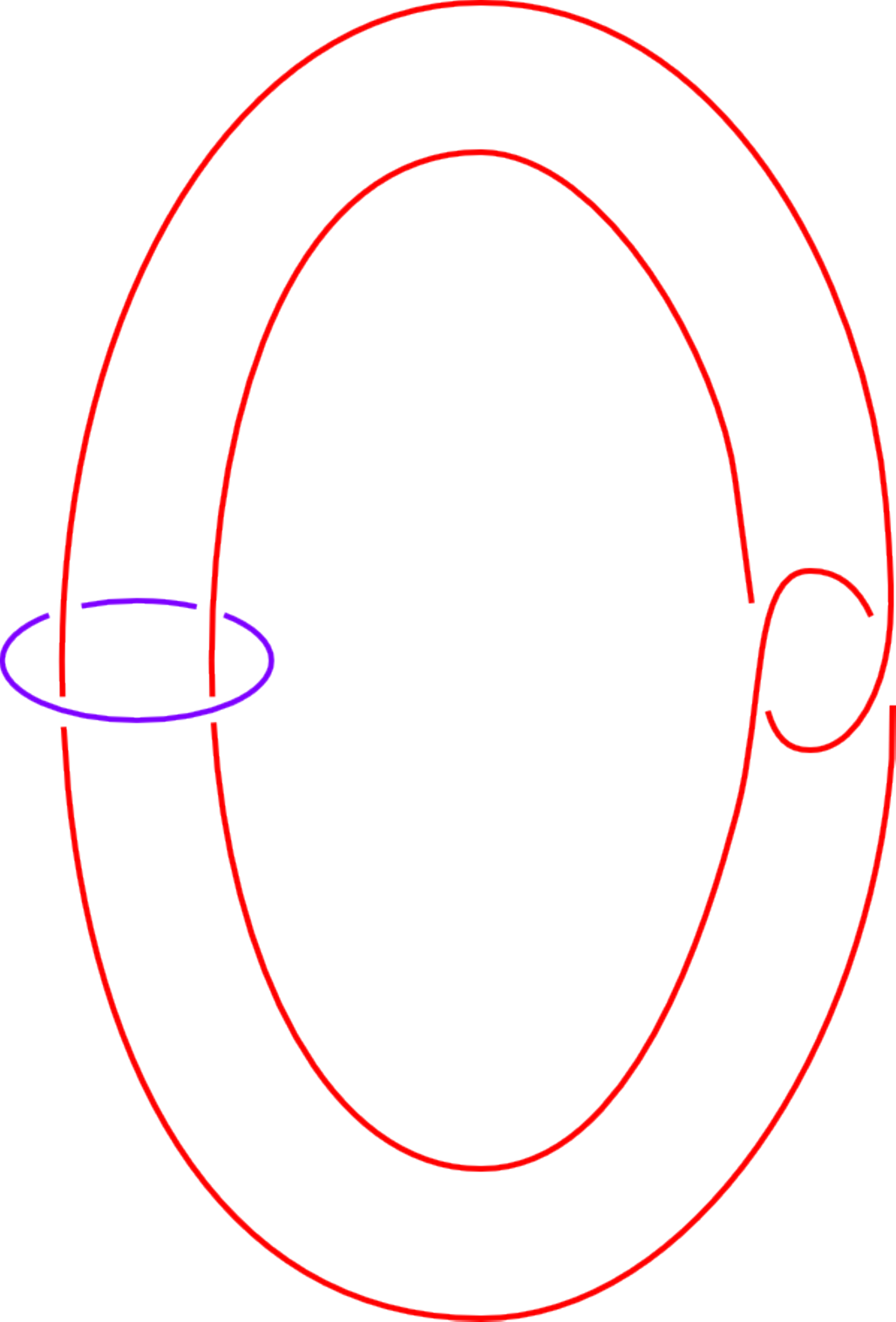}
\caption{The Whitehead Link}
\label{Whitehead}
\end{subfigure}
\begin{subfigure}[b]{0.3\textwidth}
\centering
\includegraphics[width=0.5\textwidth, height=0.125\textheight]{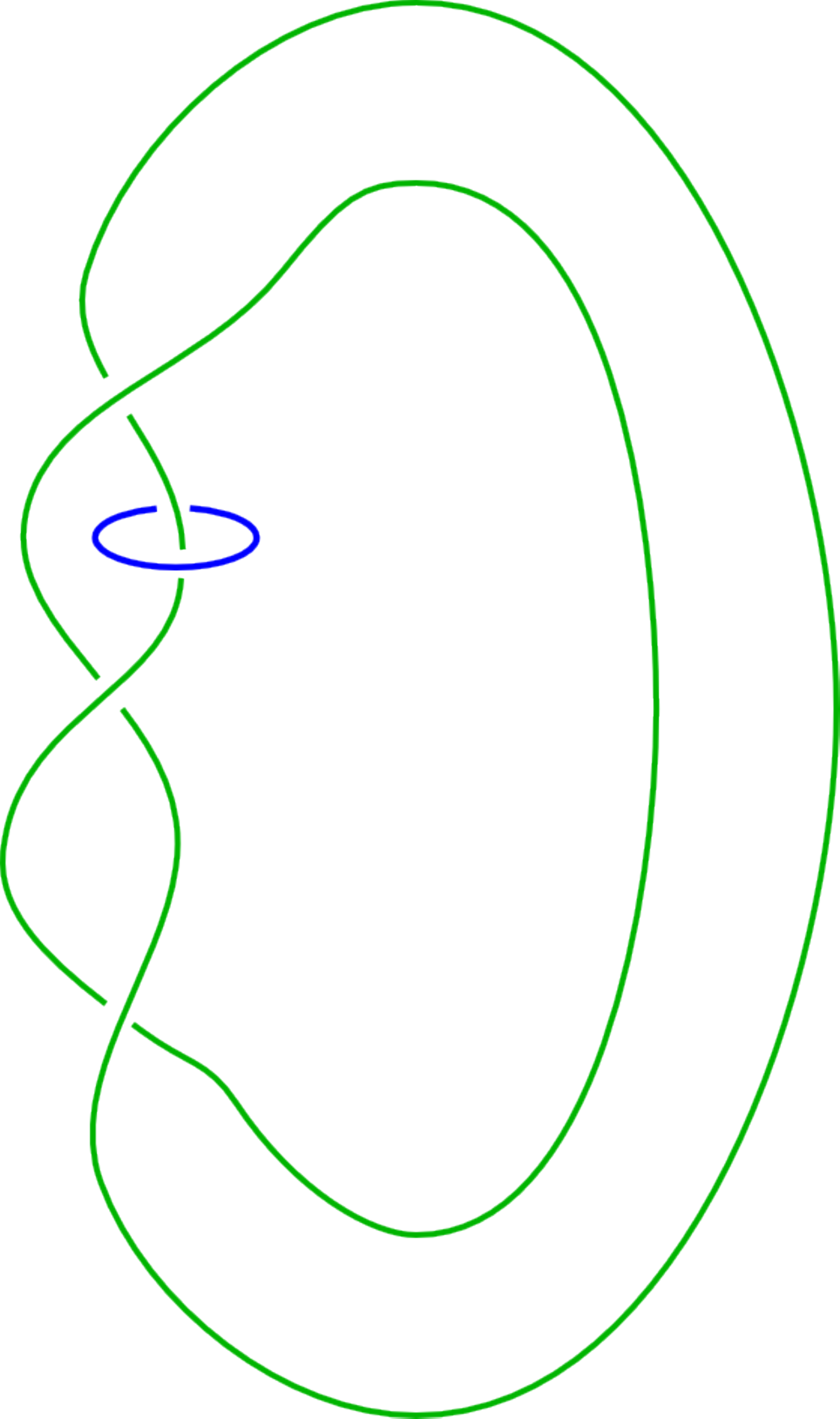}
\caption{Trefoil}
\end{subfigure}
\begin{subfigure}[b]{0.3\textwidth}
\centering
\includegraphics[width=0.5\textwidth, height=0.125\textheight]{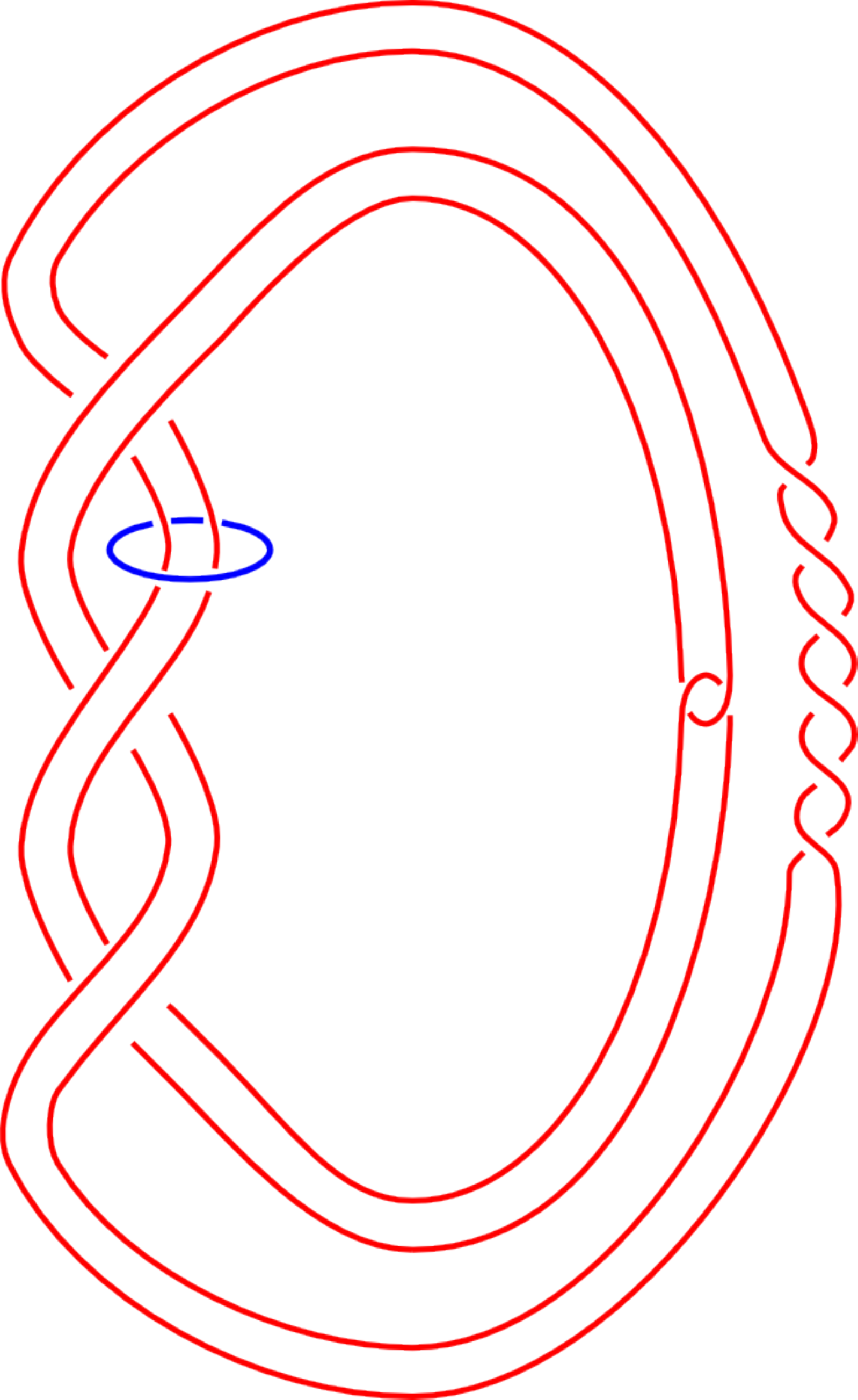}
\caption{Untwisted Whitehead Double of the Trefoil}
\end{subfigure}
\caption{An example of a satellite}
\end{figure}

In greater generality, we can add more twists to the clasp of \autoref{Whitehead} to obtain the patterns included in \autoref{fig::D_n}. These are the Whitehead-like patterns under consideration in the present article.

\begin{figure}[H]
\medskip
\centering
\includegraphics[width=0.225\textwidth,height=0.15\textheight]{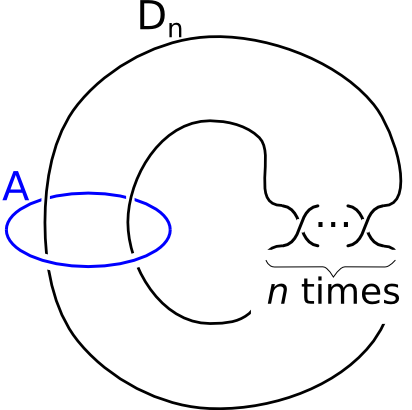}
\medskip
\caption{The Whitehead-like patterns $D_n$.}\label{fig::D_n}
In this figure, $n>0$ denotes the number of positive half twists. Also, since we require $lk(A,D_n)=0$, we will further assume that $n$ is an even integer.
\medskip
\end{figure}

Since the pattern $D_n$, as a knot in $S^3$, is unknotted and $lk(A,D_n)=0$ whenever $n$ is an even integer, the Alexander polynomial of the satellite knot $D_n(K)$ is $\Delta_{D_n(K)}(t)=1$. A theorem of Freedman \cite{freedman, freedman2, freedman-quinn} states that every knot with Alexander polynomial $1$ is topologically slice. This implies that for any companion knot $K$, the satellite $D_n(K)$ is a topologically slice knot. We will later show that whenever $K=T_{p,q}$ with $(p,q)$ a pair of positive and relatively prime integers, the satellite knots $D_n(T_{p,q})$ are not smoothly slice.

\section{Instanton Cobordism Obstruction}\label{sec::gauge}
In this section we survey the theory of instantons on $SO(3)$-bundles developed by  Furuta \cite{furuta} and Fintushel-Stern \cite{FS-pseudofree,FS-inst} in the setting of orbifolds (i.e. manifolds with a special kind of singularities), and recast by Hedden-Kirk \cite{hedden-kirk-cob} in the setting of manifolds with tubular or cylindrical ends. Additionally, in this section we introduce the instanton cobordism obstruction, that is, the way in which the topology of the instanton moduli space obstructs the existence of certain 4--manifolds. \\

Following \cite{furuta,FS-pseudofree,FS-inst,hedden-kirk-cob}, consider the Seifert fibered sphere $\Sigma=\Sigma(p,q,r)$ and the mapping cylinder $\mathcal{W}$ of the Seifert projection $\Sigma\to S^2$. The latter space is a negative definite orbifold with boundary $\Sigma$ and with three singularities, each of which has a neighborhood homeomorphic to a cone on a lens space. To avoid singularities, form a manifold $W=W(p,q,r)$ by removing the aforementioned neighborhoods from the mapping cylinder $\mathcal{W}$, and notice that $$H^2(W;\Z)\cong H^2(W,\Sigma ;\Z)\cong\Z.$$
One of the key components of the theory is that these groups have a preferred generator. Let $e$ be the generator of $H^2(W;\Z)$ and notice that this cohomology class determines an $SO(2)$--vector bundle $\mathcal{L}$ over $W$, which is trivial over $\Sigma$. In addition, if $\varepsilon$ is the trivial real vector bundle of rank 1 over $W$, the bundle $\mathcal{L}\oplus\varepsilon$ is an $SO(3)$--vector bundle over $W$. Then, if $X$ is a 4--manifold with $\Sigma$ as one of its boundary components, one can form $M=X\underset{\Sigma}{\cup} W$ and since $\mathcal{L}\oplus\varepsilon$ is trivial over $\Sigma$, it can be extended trivially to an $SO(3)$--vector bundle $E$ over $M$. \\

For technical reasons originating from analytical considerations, it is necessary to attach to $M$ cylindrical ends isometric to $[0,\infty)\times\partial M$ to form a non-compact manifold $M_\infty$. One then considers the corresponding extension of the bundle $E$ to $M_\infty$ and studies the theory of connections $A$ on $E$ for which the energy integral $$\mathcal{E}(A)=\int\limits_{M_\infty}\text{Tr}(F_A\wedge \ast F_A)<\infty.$$ Here $F_A$ is the curvature of $A$ and $\ast$ is the Hodge star operator. However, one of the subtle variations present in the cylindrical end formulation of the theory of instantons is the presence of limiting connections on $E$ that are determined by the cohomology class $e$. Modulo gauge equivalence, the class $e$ uniquely determines a flat connection $\beta_i$ on the restriction of $L$ to each of the lens spaces in the boundary of $W$. Furthermore, if $\vartheta_i$ is the trivial connection on the restriction of $\varepsilon$ to the $i$-th lens space, we can form $\alpha_i=(\beta_i,\vartheta_i)$ to obtain an $SO(3)$-connection on the restriction of $E$ to the $i$-th lens space. Then, if we choose the trivial $SO(3)$-connection over every other boundary component of $X\cup W$, the tuple $\boldsymbol\alpha=(\alpha_1,\alpha_2,\alpha_3,\theta,\ldots,\theta)$ is the limiting flat connection and $(E,\boldsymbol\alpha)$ is the adapted bundle (in the sense of \cite{donaldson-book}) to be considered.\\

For a positive number $\delta$ and an appropriate weighted Sobolev norm $\| \cdot\|_\delta$, the moduli space $\M=\M_{\delta}(E,\boldsymbol\alpha)$ is the set of gauge equivalence classes of finite weighted norm $SO(3)$-connections $A$ on $E$ that limit to $\boldsymbol\alpha$ and that satisfy the anti-self-dual (ASD) equation $$-F_A=\ast F_A.$$ In other words, $\M$ is the moduli space of instantons over $M_\infty$. Then, perhaps after perturbing either the metric of $M_\infty$ or the anti-self-dual equation, $\M$ can be shown to have the structure of a smooth manifold with some singular points. An in-depth account of the theory of instantons over manifolds with cylindrical ends can be found in \cite{donaldson-book}.\\

In summary, the cohomology class $e$ determines the bundle $(E,\boldsymbol\alpha)$. The next theorem shows that if $X$ is a negative definite 4--manifold, the choice of $e$ also gives information about the topology of the instanton moduli space $\M$ and thus, all the gauge theory over $M_\infty$.

\begin{theorem}\label{moduli} Let $X$ be a negative definite 4-manifold whose boundary consists of the union of some Seifert fibered homology spheres $\Sigma_i=\Sigma(p_i,q_i,k_ip_iq_i-1)$, for $i=1,\ldots,N$. Consider $W=W(p_N,q_N,k_Np_Nq_N-1)$ and form $M=X\union{\Sigma_N}W$. Also, let $E$ be the $SO(3)$-bundle over $M_\infty$ determined by the generator $e$ of $H^2(W;\Z)$.\\
The moduli space $\M$ of finite energy instantons on $E$ is a (possibly non-compact) smooth 1- manifold with boundary and with the following properties:
\begin{enumerate}[label=(\alph*),ref=Theorem \thetheorem (\alph*)]
\item\label{moduli_red} The number of boundary points of $\M$ is given by $C(e)=T/2^{\beta}$ where $T$ is the order of the torsion subgroup of $H_1(X;\Z)$ and $\beta=\text{rank}\left(H_1(X;\Z/2)\right)-\text{rank}\left(H_1(X;\Z)\right)$. 
\item\label{moduli_compact} If $p_iq_i(k_ip_iq_i-1)<p_{i+1}q_{i+1}(k_{i+1}p_{i+1}q_{i+1}-1)$, then $\M$ is compact. 
\end{enumerate}
\end{theorem}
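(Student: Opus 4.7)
The proof splits into the two parts, and the strategy follows the standard framework for instantons on orbifolds and cylindrical-end manifolds due to Fintushel-Stern, Furuta, and Hedden-Kirk.

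For part (a), the boundary points of the smooth $1$-manifold $\M$ are the reducible anti-self-dual connections on $(E, \boldsymbol\alpha)$. A reducible corresponds to an $SO(2)$-splitting $E = L \oplus \varepsilon$ with $L$ an extension of $\mathcal{L}$ over all of $M$ whose restriction to every non-$W$ end is trivial, and two splittings yield the same $SO(3)$-connection iff the $SO(2)$ bundles are swapped by $L \mapsto L^{-1}$. First I would enumerate the possible $L$ using a Mayer-Vietoris computation for $M = X \cup_{\Sigma_N} W$, exploiting that each $\Sigma_i$ is an integer homology sphere: such extensions are parametrized by classes $c_1(L) \in H^2(M; \Z)$ with a prescribed image in $H^2(W; \Z)$ and a prescribed value of $c_1(L)^2$ matching the energy of $\M$. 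Negative definiteness of $X$ ensures this set is finite. Then I would quotient by the $L \mapsto L^{-1}$ involution, whose fixed points are precisely the $2$-torsion extensions. A direct cohomological count, using the universal coefficient theorem and the $2$-torsion structure of $H_1(X;\Z)$, yields the formula $T/2^\beta$.

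For part (b), failure of compactness for a sequence $\{A_n\} \subset \M$ decomposes into three mechanisms by Uhlenbeck compactness and the cylindrical-end convergence theory: bubbling at interior points of $M_\infty$, energy loss through an intermediate flat limit on one of the cylindrical ends, and geometric failure at infinity. Bubbling lowers the formal dimension of $\M$ by $8$, producing an empty generic stratum. The main remaining mechanism to rule out is splitting off energy at an end via a non-trivial flat $SO(3)$-connection there. Here the hypothesis enters. The total energy of any $A \in \M$ is a specific value determined by the class $e$, the Seifert data of $\Sigma_N$, and $\boldsymbol\alpha$, and is of order $1/(4 p_N q_N r_N)$ where $r_N = k_N p_N q_N - 1$. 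By the Fintushel-Stern lower bound, any non-trivial flat $SO(3)$-connection on $\Sigma(p_i, q_i, r_i)$ has Chern-Simons value at least $1/(4 p_i q_i r_i)$; the assumption $p_i q_i r_i < p_N q_N r_N$ for $i < N$ then prevents energy absorption at any $\Sigma_i$ end. Analogous known inequalities on flat $SO(3)$-connections over lens spaces rule out splitting through the three lens space ends of $W$.

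The main obstacle I anticipate is making the energy comparison precise at each end. Chern-Simons invariants live in $\R/\Z$, so one must select the appropriate real-valued lifts at each end, verify the strict inequalities that rule out intermediate flat limits, and confirm that bubbling genuinely produces a lower-dimensional stratum rather than contributing a boundary component. The reducible count in part (a) is more routine cohomological bookkeeping once the $\Z/2$ involution is correctly identified; the substantive work is in part (b), where the numerical hypothesis forces the total instanton energy to stay strictly below the minimum non-trivial flat-connection energy at every other end.
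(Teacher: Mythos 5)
Your overall strategy is the same as the paper's (and the Fintushel--Stern/Furuta/Hedden--Kirk framework it cites): boundary points of the moduli space are the reducibles, counted cohomologically, and compactness follows by comparing the total relative Pontryagin number with the bubbling quantum and with the minimal Chern--Simons gaps on the ends, with the monotonicity hypothesis entering exactly where you put it; your normalization ($1/(4p_Nq_Nr_N)$ versus the paper's $p_1(E,\boldsymbol\alpha)=1/(p_Nq_N(k_Np_Nq_N-1))$ with CS differences in $(0,4]$) is consistent. Two caveats on part (b): the paper excludes bubbling purely by energy quantization --- a bubble changes $p_1$ by a multiple of $4$ while the total is strictly less than $4$ --- rather than by your ``formal dimension drops by $8$, so the stratum is generically empty'' argument, which would require an additional transversality/genericity discussion (delicate near reducibles) that is unnecessary here; and the assertion that $\M$ is one-dimensional, which is part of the statement, rests on the index computation $R(p,q,kpq-1)=1$ (via the Neumann--Zagier formula), which your proposal never addresses. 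The lens-space ends are handled in the paper by the explicit bound $\tau(L)\geq\min\{1/p_N,\,1/q_N,\,1/(k_Np_Nq_N-1)\}$, which is the precise content behind your ``analogous known inequalities.''

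The concrete gap is in part (a). As you set it up --- extensions $L$ of $\mathcal{L}$ with prescribed restriction to $W$, modulo the involution $L\mapsto L^{-1}$ whose fixed points are the $2$-torsion classes --- the count cannot come out to $T/2^{\beta}$: an involution on a set of $T$ elements with $2^{\beta}$ fixed points has $(T+2^{\beta})/2$ orbits, not $T/2^{\beta}$, and moreover $L\mapsto L^{-1}$ does not even preserve your constraint set, since it sends the prescribed class $e\in H^2(W;\Z)$ to $-e$. Note that if $A$ denotes the torsion subgroup of $H_1(X;\Z)$, then the universal coefficient theorem gives $2^{\beta}=|A[2]|$, so $T/2^{\beta}=|A/A[2]|=|2A|$; the factor $2^{\beta}$ therefore has to arise from a divisibility-by-two (mod $2$, i.e.\ $w_2$-type) constraint --- identifying extensions that differ by classes in $A[2]$, equivalently counting only torsion classes in the image of multiplication by $2$ --- which is the bookkeeping carried out in the references the paper cites for this step. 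As written, your mechanism would produce the wrong number of boundary points, and since part (a) is exactly this count, that step needs to be redone along the lines just indicated.
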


\bigskip
In what follows we offer a broad idea of the proof. For a precise account we refer the reader to \cite{hedden-kirk-cob}.\\

Using the theory of singular bundles over orbifolds, Fintushel-Stern compute the index for the bundle $L\oplus\varepsilon$ over $W(a_1,a_2,a_3)$ and give an explicit formula as $$R(a_1,a_2,a_3)=\frac{2}{a_1a_2a_3}+\sum_{i=1}^3\frac{2}{a_i}\sum_{k=1}^{a_i-1}\cot\left(\frac{\pi ak}{a_i^2}\right)\cot\left(\frac{\pi k}{a_i}\right)\sin^2\left(\frac{\pi k}{a_i}\right).$$
Furthermore, Hedden and Kirk \cite{hedden-kirk-cob} show that whenever $R(a_1,a_2,a_3)$ is positive, it equals the dimension of the moduli space of instantons over the non-compact manifold $M_\infty$ obtained from the augmented manifold $M=X\cup W$, for any 4--manifold $X$. A calculation using the Neumann-Zagier formula \cite{nz} shows that when $p$ and $q$ are relatively prime positive integers and $k\geq1$, the Fintushel-Stern invariant for $\Sigma(p,q,kpq-1)$ is such that $$R(p,q,kpq-1)=1,$$ thus proving that $\M$ is a 1-dimensional space.\\
It can be shown that the boundary points of $\M$ correspond to reducible connections.  Using results found in \cite{Taubes-periodic,hedden-kirk-cob} and some basic algebraic topology one can show that the number of reducible connections is given by $C(e)=T/2^{\beta}$ where $T$ is the order of the torsion subgroup of $H_1(X;\Z)$ and $\beta=\text{rank}\left(H_1(X;\Z/2)\right)-\text{rank}\left(H_1(X;\Z)\right)$ as claimed in \autoref{moduli_red}.\\

Finally, the question about compactness is in fact a question about convergence. To address this question, for any connection $A$ on $E$ that limits to $\boldsymbol\alpha$, consider the integral $$-\frac{1}{8\pi^2}\int\limits_{M_\infty}\text{Tr}(F_A\wedge F_A).$$ The value of this integral can be shown to be independent of the choice of $A$ and thus an invariant of the bundle $(E,\boldsymbol\alpha)$. This invariant is usually denoted by $p_1(E,\boldsymbol\alpha)$ and known as the Pontryagin number of $(E,\boldsymbol\alpha)$. In addition, $p_1(E,\boldsymbol\alpha)$ captures convergence of sequences of connections in $\M$ modulo gauge equivalence. Indeed, Uhlenbeck compactness for non-compact manifolds characterizes lack of convergence in $\M$ as taking one of two different forms:  `Bubbling' and `Breaking'. Bubbling happens when the curvature accumulates near a point inside a compact set in $M_\infty$ and results in a change of the Pontryagin number of the bundle. In fact, by Uhlenbeck's Removable Singularities Theorem \cite{uhlenbeck-rem}, this change comes in multiples of 4 and so, if $p_1(E,\boldsymbol\alpha)$ is less than $4$, bubbling cannot occur. Breaking happens when a region appears in one of the cylindrical ends of $M_\infty$ where the connection looks like an instanton on a tube that limits to a flat connection at either end of the tube. Furuta \cite{furuta} shows that the curvature of the connection at such a region is non-zero and the energy of the connection is greater than or equal to the Chern-Simons invariant of the limiting connections. For ease of notation, for $Y$ a 3--manifold denote by $\tau(Y)$ the minimum of the differences $cs(Y, b) - cs(Y, \boldsymbol\alpha|_Y) \in (0, 4]$, where $b$ ranges over all flat connections on $E|_Y$. So, if $p_1(E,\boldsymbol\alpha)$ is less than $\tau(Y,e)$ for every connected component $Y$ of $\partial M$, breaking cannot occur. In conclusion, if $p_1(E,\boldsymbol\alpha)<4$ and $p_1(E,\boldsymbol\alpha)<\min\{\tau(Y) \mid Y\subseteq \partial M\}$, neither bubbling nor breaking can occur, and thus the previous inequalities constitute a compactness criterion for the moduli space $\M$. Computations of these quantities for the case at hand and proofs of the inequalities will show compactness of $\M$. First, an argument involving the intersection form of $W$ shows that $$p_1(E,\boldsymbol\alpha)=\frac{1}{p_Nq_N(k_Np_Nq_N-1)}<4$$ and can be found in \cite{hedden-kirk-cob}. Further, if $L$ is any of the lens spaces in the boundary of $W$, then its minimum Chern-Simons invariant satisfies $\tau(L)\geq\min\left\{ \frac{1}{p_N},\, \frac{1}{q_N},\,\frac{1}{k_Np_Nq_N-1}\right\}>p_1(E,\boldsymbol\alpha).$  In addition, it is also known \cite{furuta,FS-inst} that $\tau(\Sigma(p,q,kpq-1))= \frac{1}{pq(kpq-1)}.$ Then, the condition $$p_iq_i(k_ip_iq_i-1)<p_{i+1}q_{i+1}(k_{i+1}p_{i+1}q_{i+1}-1)$$ implies $p_1(E,\boldsymbol\alpha)<\min\{\tau(\Sigma_i) \mid i=1,\ldots,N-1\}$ and so, by the compactness criterion previously described, $\M$ is in fact a compact space as asserted in \autoref{moduli_compact}. This completes the sketch of the proof of \autoref{moduli}.\\

To obtain the Instanton Cobordism Obstruction further assume that $H_1(X;\Z/2)=0$. In that case $H_1(X;\Z)$ would be a torsion group with no even torsion and so $\beta$ would be $0$ and $T=|H_1(X;\Z)|$ would be an odd integer. Therefore, $C(e)=|H_1(X;\Z)|$ and the moduli space $\M$ would contain an odd number of reducible connections. However, by \autoref{moduli}, $\M$ is a compact 1-dimensional manifold. Since a compact 1-dimensional manifold cannot have an odd number of boundary components, \autoref{moduli} obstructs the existence of a negative definite 4-manifold satisfying $H_1(X;\Z/2)=0$. The following theorem is a reformulation of \autoref{moduli}, with the additional hypothesis $H_1(X;\Z/2)=0$, expressed in purely topological terms. The only reason for its inclusion is to make the explanation of the result easier to follow by avoiding gauge-theoretical terminology.

\begin{theorem}\label{cobound}Let $p_i,q_i$ be relatively prime integers and $k_i$ a positive integer for $i=1,\ldots,N$. If $\left\{\Sigma_i\right\}_{i=1}^N$ is a family of Seifert fibred homology 3-spheres such that $\Sigma_i=\Sigma(p_i,q_i,k_ip_iq_i-1)$ and satisfying
\begin{equation}\label{criterion}
p_iq_i(k_ip_iq_i-1)<p_{i+1}q_{i+1}(k_{i+1}p_{i+1}q_{i+1}-1),\end{equation}
then no combination of elements in $\left\{\Sigma_i\right\}_{i=1}^N$ cobounds a smooth 4-manifold $X$ with negative definite intersection form and such that $H_1(X;\Z/2)=0$.
\end{theorem}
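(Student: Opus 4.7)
The plan is to argue by contradiction, leveraging \autoref{moduli}. Suppose that some nonempty disjoint union $Y=\bigsqcup_{i\in S}\Sigma_i$ of boundary components (for $S\subseteq\{1,\ldots,N\}$) cobounds a smooth negative definite 4-manifold $X$ with $H_1(X;\Z/2)=0$. Let $N'=\max S$, so that $\Sigma_{N'}$ is the component of largest weight $p_{N'}q_{N'}(k_{N'}p_{N'}q_{N'}-1)$. When the statement is read with signed copies allowed in the ``combination,'' the preliminary step is to reduce, by cancelling opposite copies and relabeling, to the case in which $\Sigma_{N'}$ appears with its preferred (negative definite plumbing) orientation, which is the orientation needed to glue in $W$.

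The first concrete step is to form the augmented manifold $M=X\union{\Sigma_{N'}}W(p_{N'},q_{N'},k_{N'}p_{N'}q_{N'}-1)$. Since $\Sigma_{N'}$ is an integral homology sphere, Mayer-Vietoris shows that the intersection form of $M$ is the orthogonal sum of those of $X$ and $W$, and hence that $M$ is negative definite. The boundary of $M$ now consists of the remaining components of $\partial X$ together with the three lens space ends of $W$, so $M$ fits directly into the setup of \autoref{moduli}.

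I would then read off the two conclusions of that theorem. The assumption $H_1(X;\Z/2)=0$ forces, by the universal coefficient theorem, $H_1(X;\Z)$ to be finite of odd order $T$; in particular both $\mathrm{rank}(H_1(X;\Z))$ and $\mathrm{rank}(H_1(X;\Z/2))$ vanish, so $\beta=0$. By \autoref{moduli_red} the count of reducibles is therefore $C(e)=T$, an odd integer. The hypothesized chain \eqref{criterion} is exactly the compactness hypothesis of \autoref{moduli_compact}, so $\M$ is a compact smooth 1-manifold.

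The contradiction is then the standard parity observation: a compact smooth 1-manifold has an even number of boundary points, but $|\partial\M|=C(e)$ is odd. Thus no such $X$ can exist. The main obstacle in this plan sits upstream, hidden inside the proof of \autoref{moduli}; once that theorem is in hand, the present statement is a direct and essentially formal topological rewriting, with only the orientation reduction in the first paragraph requiring any additional bookkeeping.
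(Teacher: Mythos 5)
Your argument is exactly the paper's: \autoref{cobound} is obtained by gluing $W(p_N,q_N,k_Np_Nq_N-1)$ to $X$ along the largest Seifert sphere occurring in the boundary, invoking \autoref{moduli_red} (where $H_1(X;\Z/2)=0$ gives $\beta=0$ and $T$ odd) together with \autoref{moduli_compact} (the chain \eqref{criterion} giving compactness), and concluding from the parity of the boundary of a compact 1-manifold. Your only addition is the preliminary orientation bookkeeping, which the paper silently omits; just note that ``cancelling opposite copies'' is not literally available (gluing $\Sigma_i$ to $-\Sigma_i$ would create first homology), though this does not change the shared core of the argument.
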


In summary, the crucial idea is that the topology of the instanton moduli space obstructs the existence of some definite 4--manifolds. Also key is the fact that the cohomology class $e$ and the minimum Chern-Simons invariant of the boundary 3--manifolds provide important information about the topology of the moduli space. Note that the compactness criterion  presented in \autoref{moduli_compact} is precisely the criterion for the independence of a family of satellites of the form $D_n(T_{p,q})$.\\

\section{Topological Description of 2-fold Covers}\label{sec::covers}
A useful method to study the algebraic structure of a group $G$ is to consider homomorphisms $G\to H$ and use information about the algebraic structure of $H$. In the case of the smooth concordance group $\Conc_\infty$ it is common to associate to the concordance class of a knot $K$ the equivalence class of the 2-fold cover of $S^3$ branched over $K$, $\Sigma_2(K)$, in the homology cobordism group of oriented $\Z/2$--homology spheres, $\Theta^3_{\Z/2}$.\\

The following lemma and the comment after it show not only that the (1,2) relation that defines concordance translates into the (3,4) relation that defines homology cobordism, but also that the group operation is preserved by the assignment $K\to \Sigma_2(K)$.
\begin{lemma}[Lemma 2 of \cite{casson-gordon-cob}] \label{2-fold_conc} Let $K \subseteq S^3$ be a knot. Then, 
\begin{enumerate}[label=(\alph*),ref=Theorem \thetheorem (\alph*)]
\item $\Sigma_2(K)$ is a $\Z/2$ homology 3-sphere; that is, $H_*(\Sigma_2(K);\Z/2)\cong H_*(S^3;\Z/2)$. 
\item If $K$ is slice, then $\Sigma_2(K)= \partial Q$, where $Q$ is a $\Z/2$--homology 4--ball, $H_*(Q; \Z/2)\cong H_*(B^4; \Z/2)$.\end{enumerate}\end{lemma}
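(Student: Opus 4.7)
For part (a), the plan is to exploit a Seifert surface $F$ of genus $g$ for $K$ and its Seifert matrix $V$, defined relative to a basis of $H_1(F;\Z)\cong\Z^{2g}$. I would apply a standard Mayer-Vietoris argument to the description of $\Sigma_2(K)$ obtained by cutting $S^3$ along $F$ and gluing two copies of the resulting complement, which exhibits $H_1(\Sigma_2(K);\Z)$ as the cokernel of the matrix $V+V^T$. The crucial observation is that, modulo $2$, one has $V+V^T\equiv V-V^T$, and $V-V^T$ represents the intersection pairing on $H_1(F;\Z)$, a unimodular symplectic form of determinant $\pm 1$. Hence $\det(V+V^T)$ is odd, so $|H_1(\Sigma_2(K);\Z)|$ is odd, which forces $H_1(\Sigma_2(K);\Z/2)=0$. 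Poincar\'e duality together with the universal coefficient theorem then upgrades this to $H_*(\Sigma_2(K);\Z/2)\cong H_*(S^3;\Z/2)$.

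For part (b), I would first fix a smooth slice disk $D\subset B^4$ for $K$ and let $\pi:Q\to B^4$ denote the $2$-fold cover branched along $D$. By construction $\partial Q=\pi^{-1}(S^3)=\Sigma_2(K)$, so only the mod $2$ homology statement requires proof. My plan is to compute $H_*(Q;\Z/2)$ via Mayer-Vietoris applied to the decomposition $Q=\tilde X\cup N(\tilde D)$, where $X=B^4\setminus\mathring N(D)$ is the exterior of the slice disk and $\tilde X\to X$ is the $2$-fold cyclic cover corresponding to $\pi_1(X)\twoheadrightarrow H_1(X;\Z)=\Z\twoheadrightarrow\Z/2$. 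As a preliminary step, a Mayer-Vietoris argument for $B^4=X\cup N(D)$, using that the normal bundle of $D$ is trivial (since $D$ is contractible) and that $\partial N(D)\cap X\simeq D\times S^1\simeq S^1$, will establish $H_*(X;\Z)\cong H_*(S^1;\Z)$ with $H_1(X;\Z)$ generated by a meridian $\mu$ of $D$. The Gysin sequence for the double cover $\tilde X\to X$, classified by the nontrivial class in $H^1(X;\Z/2)$, will then yield $H^*(\tilde X;\Z/2)\cong H^*(S^1;\Z/2)$.

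The concluding Mayer-Vietoris sequence for $Q=\tilde X\cup N(\tilde D)$ uses the intersection $\tilde X\cap N(\tilde D)\simeq S^1$, generated by the meridian $\tilde\mu$ of $\tilde D$ in $Q$, and will collapse to $H_*(Q;\Z/2)\cong H_*(\mathrm{pt};\Z/2)$ provided the inclusion-induced map $H_1(S^1;\Z/2)\to H_1(\tilde X;\Z/2)$ is an isomorphism. Verifying this is the main obstacle of the argument. The idea is to unwind the local model $z\mapsto z^2$ of the branched cover near $D$: the meridian $\tilde\mu$ projects to $\mu^2$ in $X$, and in fact the preimage cycle $\pi^{-1}(\mu)$ is exactly $\tilde\mu$, so $[\tilde\mu]$ equals the transfer $\pi^![\mu]$. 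Since the Gysin computation identifies $\pi^!:H_1(X;\Z/2)\to H_1(\tilde X;\Z/2)$ as an isomorphism, $[\tilde\mu]$ generates $H_1(\tilde X;\Z/2)$. Once this is in hand the Mayer-Vietoris sequence immediately yields that $Q$ is a $\Z/2$-homology $4$-ball, completing the proof. As a sanity check, an alternative route that bypasses the meridional computation is to combine Smith theory (which bounds $\dim_{\Z/2}H_*(Q;\Z/2)\leq \dim_{\Z/2}H_*(B^4;\Z/2)+\dim_{\Z/2}H_*(D;\Z/2)=2$) with the branched-cover Euler characteristic identity $\chi(Q)=2\chi(B^4)-\chi(D)=1$ and Poincar\'e-Lefschetz duality over $\Z/2$ (forcing $b_1=b_3$), which together pin down $H_*(Q;\Z/2)\cong H_*(\mathrm{pt};\Z/2)$.
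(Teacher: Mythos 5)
Your proof is correct, and it is essentially the classical argument: the paper itself offers no proof of this lemma, quoting it directly as Lemma 2 of Casson--Gordon, and your route for (b) --- pass to the double cover of $B^4$ branched along a slice disk, show the disk exterior is a $\Z$-homology circle by Mayer--Vietoris, use the transfer/Gysin sequence of the double cover to see that $\tilde X$ is a $\Z/2$-homology circle with $H_1$ generated by the transfer of the meridian, and glue back the branch locus --- is precisely the standard Casson--Gordon argument specialized to $p=2$, while your (a) is the usual $\det(V+V^T)\equiv\det(V-V^T)\equiv 1 \pmod 2$ computation plus Poincar\'e duality. The one point to flag is your ``sanity check'': the inequality $\dim_{\Z/2}H_*(Q;\Z/2)\leq\dim_{\Z/2}H_*(B^4;\Z/2)+\dim_{\Z/2}H_*(D;\Z/2)$ is not the standard Smith inequality (which goes the other way, bounding the homology of the fixed set by that of the total space), so as stated it needs justification; the correct tool is the mod $2$ Smith/transfer exact sequence of the branched cover, $\cdots\to H_n(B^4,D;\Z/2)\to H_n(Q;\Z/2)\to H_n(B^4;\Z/2)\to H_{n-1}(B^4,D;\Z/2)\to\cdots$, which in fact gives the sharper bound $\dim_{\Z/2}H_*(Q;\Z/2)\leq 1$ outright since $H_*(B^4,D;\Z/2)=0$. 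Since that check is explicitly optional, the main argument stands as written.
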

Moreover, $\Sigma_2(K_1\# K_2)=\Sigma_2(K_1)\#\Sigma_2(K_2)$, where the separating sphere is obtained as the lift of the the embedded 2-sphere in $S^3$ that appears in the definition of $K_1\# K_2$ as the connected sum of pairs $\left( S^3,K_1\right)\# \left( S^3,K_2\right)$. All these observations show that the assignment $K\to \Sigma_2(K)$ is a group homomorphism $$\Sigma_2 : \Conc_\infty \to \Theta^3_{\Z/2}.$$
Therefore, the end result of the present article is in fact a result about independence in $\Theta_{\Z/2}^3$. With all the previous in mind, in this section we include a topological description of $\Sigma_2(D_n(K))$.\\

In \cite{seifert-inv,livingston-melvin} the authors offer a description of the infinite cyclic cover of a satellite knot $B(K)$ in terms of some covers of the companion and pattern knots. Since finite covers may be regarded as quotients of the infinite cyclic covers, their description can be adapted to the case of 2-fold cyclic covers of satellite knots. The branched covers are obtained by compactifying the cyclic cover and attaching to it a solid torus in such a way that a meridian of the solid torus matches with the pre-image of the meridian of the knot in the cyclic cover. In what follows we reproduce without proof the modified version of the description found in \cite{seifert-inv,livingston-melvin}.

\begin{theorem}\label{2-fold_sat} Let $B\sqcup A$ be a pattern link satisfying $lk(A,B)=0$ and $K$ a knot in $S^3$. There are splittings $$\Sigma_2\left(B\right)=V_2\union{} N_2\quad\text{and}\quad \Sigma_2\left(B(K)\right)=W_2\union{} M_2$$ such that
\begin{enumerate}[label=(\alph*),ref=Theorem \thetheorem (\alph*)]
\item The space $N_2$ consists of two copies of $N(A)$ and $M_2$ of two copies of $S^3\setminus N(K)$.
\item If $N^i$ is the $i$-th copy of $N(A)$ in $N_2$ and $X^i$ the $i$-th copy of $S^3\setminus N(K)$ in $M_2$, then $$V_2\cap N^i=T^i\quad\text{and}\quad W_2\cap X^i=U^i$$ where $T^i$ and $U^i$ are 2-tori ($i=1,2$).
\item The embedding $h$ from \autoref{def:satellite} induces a homeomorphism $h_2:V_2\to W_2$.
\item If $q_i$ and $\alpha_i$ are, respectively, the lift of the meridian and longitude of $N$ to $T^i$, then the gluing map of $\Sigma_2\left(B\right)$ identifies $\left(\mu_A\right)_i$ with $q_i$, and $\left(\lambda_A\right)_i$ with $\alpha_i$. Analogously, the gluing map of $\Sigma_2\left(B(K)\right)$ identifies $\left(\lambda_K\right)_i$ with the image of $q_i$ under $h_2$, and $\left(\mu_K\right)_i$ with the image of $\alpha_i$ under $h_2$.
\end{enumerate}
\end{theorem}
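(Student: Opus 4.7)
The plan is to construct the two branched covers $\Sigma_2(B)$ and $\Sigma_2(B(K))$ explicitly by splitting $S^3$ into pieces on which the cover is transparent, and then compare the resulting descriptions. Each $2$-fold branched cover is classified by the mod-$2$ reduction of the linking-number-with-branch-set homomorphism, so the key input is the hypothesis $lk(A,B)=0$: it makes the cover disconnected (trivial) on certain pieces and branched connected on others.

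First I would handle $\Sigma_2(B)$. Writing $S^3=N(A)\cup V$ with $V=S^3\setminus N(A)$, the condition $lk(A,B)=0$ says $[B]=0$ in $H_1(V;\Z)$, so the $2$-fold cyclic branched cover $V_2\to V$ over $B\subseteq V$ exists and is connected. On the other side, every loop in $N(A)$ is homotopic to a multiple of the core $A$, and $lk(A,B)=0$ forces the restricted cover to be trivial, so it splits as two disjoint copies $N^1\sqcup N^2$ of $N(A)$. Re-gluing along the unbranched restriction of the cover over $\partial V=\partial N(A)$ produces $\Sigma_2(B)=V_2\cup N_2$ with $T^i=V_2\cap N^i$ a pair of tori, proving (a) and the $\Sigma_2(B)$ half of (b).

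Then I would run the same argument for $\Sigma_2(B(K))$ with the decomposition $S^3=h(V)\cup \overline{S^3\setminus h(V)}$. Since $h$ is a homeomorphism, $[B(K)]=h_*[B]=0$ in $H_1(h(V);\Z)$, so the branched cover $W_2\to h(V)$ over $B(K)$ exists and $h$ lifts to a homeomorphism $h_2:V_2\to W_2$, which is (c). The complement $\overline{S^3\setminus h(V)}$ is a copy of $S^3\setminus N(K)$, and the algebraic intersection of a meridian disk of $N(K)$ with $B(K)$ equals the winding number of $B$ in $V$, which is $lk(A,B)=0$; hence $\mu_K$ has trivial mod-$2$ linking with $B(K)$, and the restricted cover splits as $M_2=X^1\sqcup X^2$. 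This gives (a) and (b) for $\Sigma_2(B(K))$.

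For (d) I would track curve identifications. By construction $q_i$ and $\alpha_i$ are the lifts of $\mu_A$ and $\lambda_A$ to $T^i\subseteq V_2$, and the cover is obtained by gluing the solid torus $N^i$ back along the cover of $\partial N(A)$ so that the meridian $(\mu_A)_i$ and longitude $(\lambda_A)_i$ of $N^i$ match $q_i$ and $\alpha_i$, respectively; any other gluing would produce a Dehn filling rather than the branched cover. For the satellite, the defining property of $h$ gives $h(\mu_A)=\lambda_K$ (the longitude of $V$ is $\mu_A$ and is sent to $\lambda_K$), and because $h$ is a homeomorphism $V\to N(K)$ it takes meridian disks of $V$ to meridian disks of $N(K)$, so $h(\lambda_A)=\mu_K$. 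Applying $h_2$ to the identifications above translates them to $(\lambda_K)_i\leftrightarrow h_2(q_i)$ and $(\mu_K)_i\leftrightarrow h_2(\alpha_i)$, giving (d). The main obstacle is bookkeeping: meridian and longitude get interchanged when passing between $A$ and $V=S^3\setminus N(A)$, and interchanged again (differently) when comparing $V$ with $N(K)$ via $h$, so one must track which curve plays which role on each torus before lifting to the branched covers.
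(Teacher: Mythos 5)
Your proof is correct, but note that the paper itself does not prove this statement: it is stated as a modification, reproduced without proof, of the description of the infinite cyclic cover of a satellite knot given in \cite{seifert-inv,livingston-melvin}, where one assembles the infinite cyclic cover of $B(K)$ from a cover of the pattern-in-solid-torus and copies of the companion exterior and then passes to the finite branched quotient. Your route is more direct and self-contained: you build $\Sigma_2(B)$ and $\Sigma_2(B(K))$ from the torus decompositions $S^3=N(A)\cup V$ and $S^3=h(V)\cup\left(S^3\setminus N(K)\right)$, using that a $2$-fold branched cover is classified by the mod-$2$ linking-with-branch-set homomorphism and that $lk(A,B)=0$ (equivalently, winding number zero of the pattern) kills this homomorphism on $N(A)$ and on the companion exterior, so those pieces lift as two trivial copies while the solid-torus piece carries the branched cover; parts (a), (b), (d) then come from inspecting the covering projection on each piece. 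Two steps deserve to be made fully explicit. For (c), the lift of $h$ to $h_2$ requires checking that the two restricted covers correspond under $h_*$: on $V\setminus B$ the classifying map sends $\mu_B\mapsto 1$ and $\mu_A,\lambda_A\mapsto 0$, and on $h(V)\setminus B(K)$ it sends $\mu_{B(K)}\mapsto 1$ while $\lambda_K=h(\mu_A)$ and $\mu_K=h(\lambda_A)$ both have even linking with $B(K)$ precisely because the winding number vanishes; this is where $lk(A,B)=0$ enters a second time and is what makes the branched cover of the pattern intrinsic to the pair $(V,B)$, rather than dependent on the embedding in $S^3$. For (d), ``applying $h_2$ to the identifications'' should be unpacked: the gluing of $\Sigma_2(B(K))$ attaches $(\mu_K)_i,(\lambda_K)_i\subseteq\partial X^i$ to the lifts of $\mu_K,\lambda_K$ in $U^i=h_2(T^i)$, and these lifts are $h_2(\alpha_i)$ and $h_2(q_i)$ because $h_2$ covers $h$ and $h(\lambda_A)=\mu_K$, $h(\mu_A)=\lambda_K$; with that phrasing your bookkeeping yields exactly the stated identifications. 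Your argument buys an elementary, verifiable proof of a statement the paper outsources, at the cost of redoing by hand the curve-tracking that the cited references package once and for all for general satellites.
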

In conclusion, there is an isomorphism 
$$\Sigma_2\left(B(K)\right)\cong V_2\union{\phi} 2\left(S^3\setminus N(K)\right),$$ where the gluing map $\phi$ identifies each copy of $\lambda_K$ with $q_i$, and each corresponding copy of $\mu_K$ with the corresponding lift $\alpha_i$. Thus, the 2-fold branched cover of $S^3$ over a satellite knot is determined by the 2-fold cover of a solid torus branched over the pattern $B$, and the curves $\alpha_i$ $(i=1,2)$. The following proposition makes these choices explicit for the patterns presented in \autoref{fig::D_n}.

\begin{proposition}\label{dec_2-fold} Given a knot $K\subseteq S^3$, the 2-fold branched cover $\Sigma_2(D_n(K))$ of $S^3$ branched over $D_n(K)$ has a decomposition $$\Sigma_2(D_n(K))=S^3\setminus N(T_{2,-2n})\union{\varphi} 2\left(S^3\setminus N(K)\right)$$ where $T_{2,-2n}$ is the $(2,-2n)$ torus link with unknotted components $A_1\sqcup A_2$. Additionally, the gluing map $\varphi$ is determined by $$\varphi_*(\mu_K)_i=-n\cdot\mu_{A_i}+\lambda_{A_i}\text{ and }\varphi_*(\lambda_K)_i=\mu_{A_i},$$ where $\mu_{A_i},\,\lambda_{A_i}$ ($i=1,2$) denote the standard meridian-longitude pairs for the components of the link $T_{2,-2n}=A_1\sqcup A_2$, and $(\mu_K)_i,\,(\lambda_K)_i$ ($i=1,2$) denote the standard meridian-longitude pairs for $K$.
\end{proposition}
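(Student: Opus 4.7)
The plan is to derive \autoref{dec_2-fold} as a specialization of \autoref{2-fold_sat} applied to the pattern $B = D_n$ with companion $K$. The main task is to identify the piece $V_2$ of the resulting decomposition, which by \autoref{2-fold_sat} is the 2-fold cover of the solid torus $V = S^3 \setminus N(A)$ branched over $D_n$, with the link complement $S^3 \setminus N(T_{2,-2n})$, and then to express the images under $h_2$ of the distinguished curves $q_i$, $\alpha_i$ on $\partial V_2$ in terms of the natural meridian--longitude coordinates on $\partial N(T_{2,-2n})$.

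For the identification of $V_2$, I would begin by observing that since $D_n$ is unknotted in $S^3$, the 2-fold branched cover $\Sigma_2(D_n)$ is again $S^3$, and therefore $V_2$ is canonically identified with $S^3 \setminus N(\pi^{-1}(A))$, where $\pi\colon S^3\to S^3$ denotes the branched covering map. The hypothesis $lk(A,D_n)=0$ forces $\pi^{-1}(A)$ to split as a two-component link $A_1\sqcup A_2$. To recognize this link as $T_{2,-2n}$ I would use a tangle decomposition: a round 2-sphere separating the clasp of $D_n$ from its outer arc splits $(S^3, D_n)$ into a trivial 2-string tangle and an $n$-twist rational tangle, both of whose 2-fold branched covers are solid tori. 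The gluing (which must reproduce $S^3$) exhibits $\pi^{-1}(A)$ as two parallel fibres on the Heegaard torus between these solid tori, twisted around each other by the $n$ half-twists prescribed by the clasp; this is precisely the torus link $T_{2,-2n}$.

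The gluing curves are then computed as follows. Since $A$ is disjoint from $D_n$, the map $\pi$ restricts to an unbranched degree-$1$ covering $T^i\to\partial N(A)$, i.e.\ a homeomorphism. A meridian disk to $A$ therefore lifts to a meridian disk to $A_i$, showing $q_i=\mu_{A_i}$. To compute $\alpha_i$, choose a Seifert disk $E$ for the unknot $A$ in $S^3$ positioned so that it meets $D_n$ transversely in the two oppositely-signed points imposed by the clasp geometry and the vanishing of $lk(A,D_n)$. The preimage $\widetilde{E}=\pi^{-1}(E)$ is then a 2-fold branched cover of a disk over two points, hence by Riemann--Hurwitz an annulus with boundary $\alpha_1\sqcup\alpha_2$ properly embedded in $S^3\setminus N(T_{2,-2n})$. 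Writing $\alpha_i=a\,\mu_{A_i}+\lambda_{A_i}$ and using the relation $[\lambda_{A_i}]=lk(A_1,A_2)\,\mu_{A_{3-i}}=-n\,\mu_{A_{3-i}}$ in $H_1\bigl(S^3\setminus N(T_{2,-2n})\bigr)\cong\Z^2$, the annulus identity $[\alpha_1]-[\alpha_2]=0$ forces $(a+n)(\mu_{A_1}-\mu_{A_2})=0$ and hence $a=-n$. Combining this with the identifications in \autoref{2-fold_sat}(d) and the correspondences $h_*(\mu_A)=\lambda_K$, $h_*(\lambda_A)=\mu_K$ from \autoref{def:satellite} yields $\varphi_*(\lambda_K)_i=\mu_{A_i}$ and $\varphi_*(\mu_K)_i=-n\,\mu_{A_i}+\lambda_{A_i}$, as required.

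The main obstacle is the geometric identification of $\pi^{-1}(A)$ with the torus link $T_{2,-2n}$ in a way that tracks signs and orientations through the tangle cover and produces the correct sign on the linking number. Once this identification is secured, the meridian coordinate $q_i=\mu_{A_i}$ falls out of the local behaviour of $\pi$ near $A$, and the longitude coefficient is extracted from a single homological computation in the link complement using the annulus $\widetilde{E}$.
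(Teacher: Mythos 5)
Your overall strategy is the same as the paper's: specialize \autoref{2-fold_sat} to the pattern $B=D_n$, use that $\Sigma_2(D_n)\cong S^3$ because $D_n$ is unknotted, and reduce the whole proposition to identifying the framed lift of $(A,0)$, that is, the link $A_1\sqcup A_2$ together with the lifted $0$--framing. The genuine gap sits exactly at that identification. In your tangle argument the curve $A$ and the lifted annulus $\widetilde E$ lie entirely in the double branched cover of the clasp-free ball, where $\pi^{-1}(A)$ is simply two annulus-parallel copies of the core of that solid torus; how much these two curves twist about each other once they sit in $S^3$ (equivalently the framing of $\widetilde E$, equivalently $lk(A_1,A_2)$) is governed by the gluing of the two lifted solid tori, i.e.\ by the double branched cover of the $n$-half-twist tangle. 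The sentence ``twisted around each other by the $n$ half-twists prescribed by the clasp; this is precisely the torus link $T_{2,-2n}$'' asserts exactly the number that has to be computed: nothing in your argument excludes $T_{2,2n}$ (note the sign flip you would have to explain: the clasp has $n$ \emph{positive} half twists, yet the lifted link is the \emph{negative} torus link), nor some other multiple of $n$. That identification is the entire quantitative content of \autoref{dec_2-fold}, and it is what the paper actually establishes by lifting a surgery description of $D_n\sqcup (A,0)$ and blowing down, as recorded in \autoref{surgery_D_n}; your proposal replaces that computation by an assertion.

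The homological step also needs repair on two counts. First, it presupposes the unproven input, since $lk(A_1,A_2)=-n$ is read off from the asserted identification, so it only converts the framing question into the same unestablished datum. Second, it is orientation-sensitive in a way you have not controlled: if $A_1,A_2$ carry the lift orientations (each projecting to $A$ orientation-preservingly), they are the boundary-oriented components of $\widetilde E$, and the correct relation in the link exterior is $[\alpha_1]+[\alpha_2]=0$, not $[\alpha_1]-[\alpha_2]=0$; your identity is valid only for the annulus-parallel orientations, and the value $-n$ for the linking number must then be checked against that same convention. The two sign choices happen to cancel and the framing $-n$ is orientation-independent, but since the proposition is precisely a sign-sensitive gluing formula, this bookkeeping, together with an honest computation of the lifted link (for instance by the Montesinos/Kirby-calculus route the paper follows), is what is needed to make the proof complete.
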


\begin{proof}  By \autoref{2-fold_sat}, to obtain a description of $\Sigma_2(D_n(K))$ it is enough to understand $V_2$, the 2-fold cover of $V=S^3\setminus N(A)$ branched over $D_n$, and $\alpha_i$ ($i=1,2$), the lifts of $\lambda_A$ to $V_2$. Since the longitude $\lambda_A$ of $A$ is the $0$--framing of $A$, it suffices to consider the framed knot $(A,0)$ and its framed lifts $(A_i,f_i)$ ($i=1,2$). Indeed, if $A_1\sqcup A_2$ is the lift of $A$ to $\Sigma_2(D_n)$, then $$V_2=\Sigma_2(D_n)\setminus N(A_1\sqcup A_2)\cong S^3\setminus N(A_1\sqcup A_2),$$ and $\alpha_i$ is the $(f_i,1)$ curve in $\partial N(A_i)$ ($i=1,2$). Therefore, the proof amounts to getting a description of the cover $\Sigma_2(D_n)$, which, since $D_n$ is trivial in $S^3$, is simply $S^3$. This uses the surgery description of the pattern $D_n\sqcup A$ and is included as \autoref{surgery_D_n}.
\end{proof}

\begin{figure}[h]
\medskip
\centering

\footnotesize
\newcolumntype{V}{>{\centering\arraybackslash\hspace{0pt}}p{0.24\textwidth}}
\begin{subfigure}[b]{\textwidth}\centering
\begin{tabular}{V<{\hspace{1cm}}V<{\hspace{1cm}}V}
\includegraphics[width=0.175\textwidth]{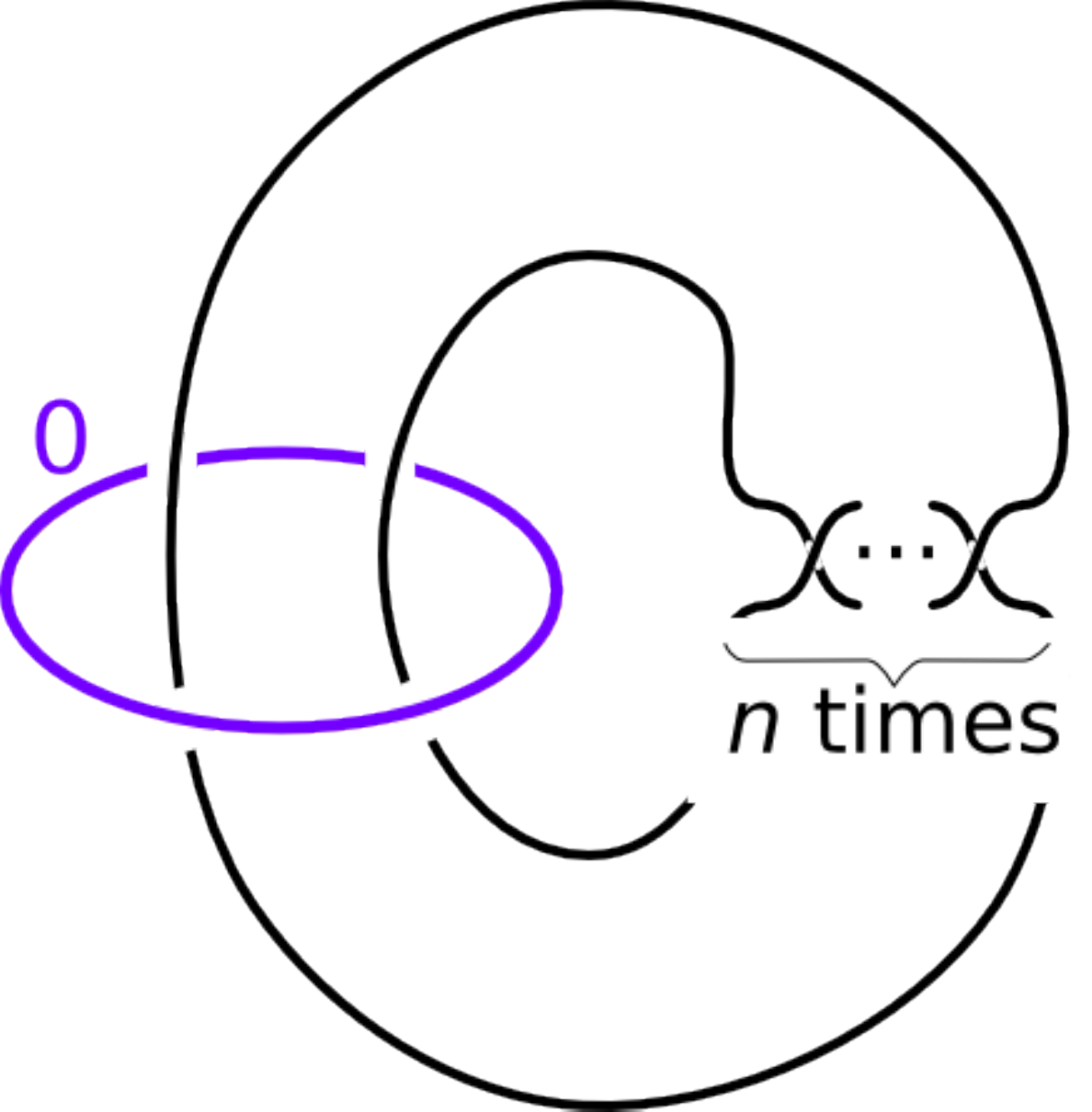}&
\includegraphics[width=0.175\textwidth]{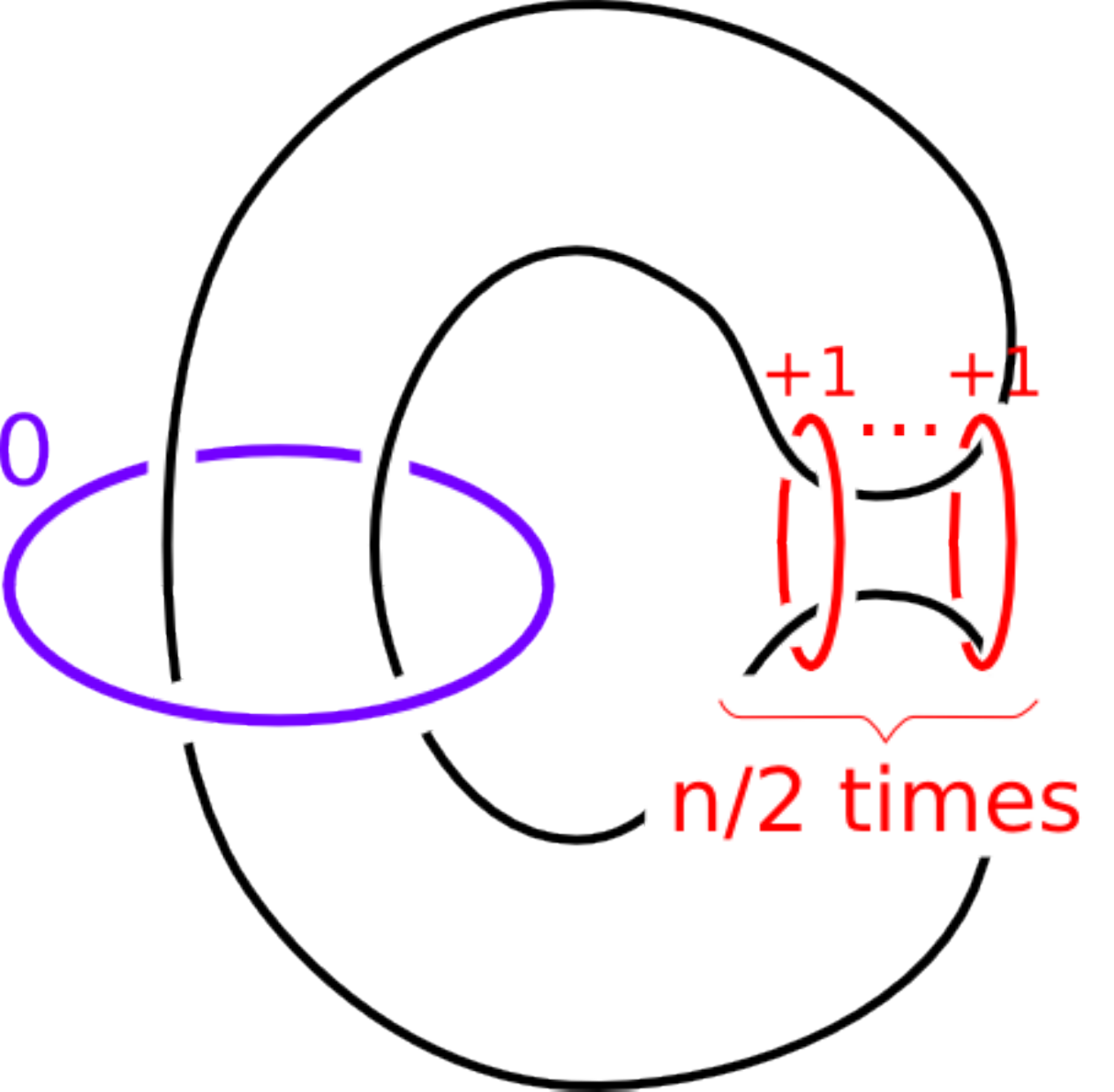}&
\includegraphics[width=0.15\textwidth, height=3.5cm]{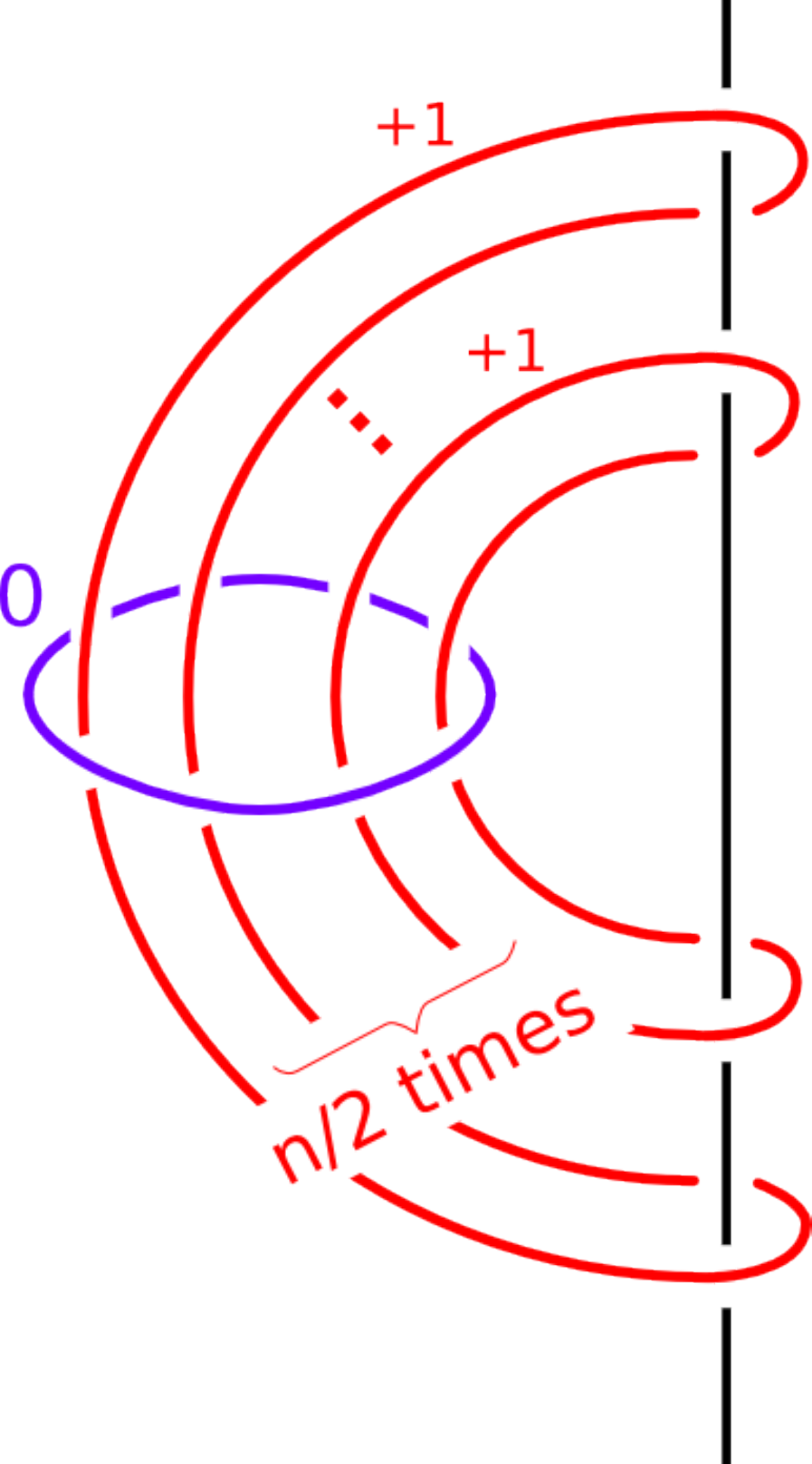}\\
The pattern $D_n$ and the pair $(A,0)$.&
Surgery description of the pattern $D_n$ as a subset of $V$.&
An isotopy of the previous diagram.
\end{tabular}
\caption{Surgery description of $D_n$ as a subspace of $V$.}
\end{subfigure}

\newcolumntype{W}{>{\centering\arraybackslash\hspace{0pt}}p{0.4\textwidth}}
\begin{subfigure}[b]{\textwidth}\centering
\begin{tabular}{W<{\hspace{1cm}}W}
\includegraphics[width=0.2\textwidth]{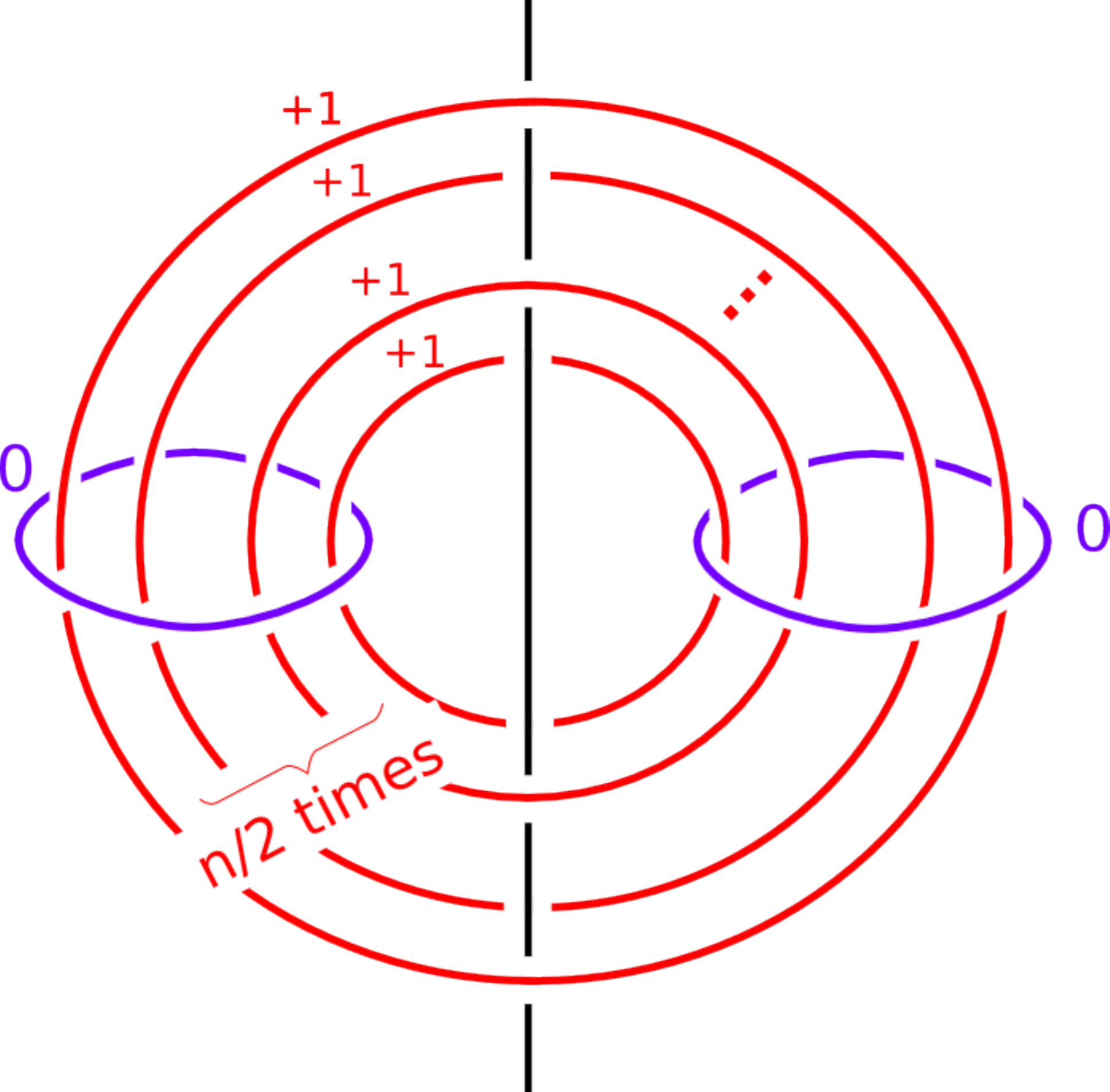}&
\includegraphics[width=0.2\textwidth, height=3cm]{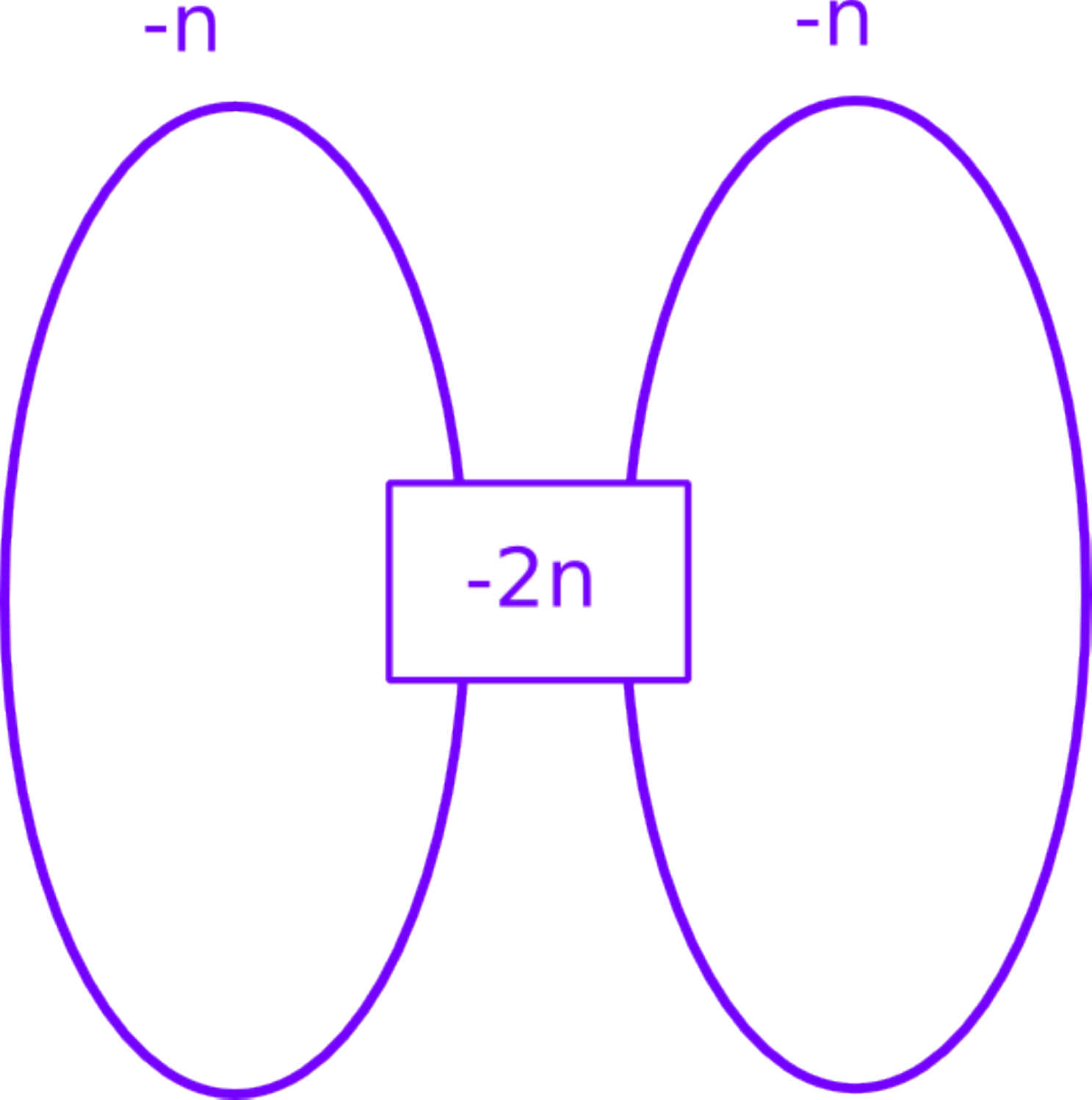}\\
Surgery diagram of the 2-fold cover of $V$ branched over $D_n$.\vfill&
Performing the surgeries one obtains the 2-fold cover of $V$ branched over $D_n$ and the lifts of $(A,0)$. The box represents half twists.
\end{tabular}
\caption{Description of the 2-fold cover of $V$ branched over $D_n$.}
\end{subfigure}
\medskip
\caption{Surgery description of $D_n$ and 2-fold cover of $D_n(K)$}\label{surgery_D_n}
\end{figure}

\section{Definite Cobordisms}\label{sec::cobordisms}
As was mentioned earlier, the main result will be obtained in terms of the instanton cobordism obstruction presented in \autoref{cobound} for a collection of Seifert fibered homology 3--spheres to cobound a negative definite 4--manifold. The issue here is that the 3--manifold $\Sigma_2(D_n(T_{p,q}))$ is not Seifert fibered. However, this obstacle can be overcome by introducing definite cobordisms with (unoriented) boundary $\Sigma_2(D_n(T_{p,q}))$ and some Seifert fibered spaces. In this section the construction of the above cobordisms is included. We start by recalling the precise definition of an oriented cobordism.

\begin{definition}\label{def_cob} Closed, oriented 3-manifolds $Y_0$ and $Y_1$ are oriented cobordant if there exists a compact, oriented 4-manifold $W$ with oriented boundary $$\partial W=-Y_0\sqcup Y_1\footnote{The convention for the induced orientation of a 3-manifold $M$ that is a boundary is to use the outward normal as the first element in an ordered basis for $H_3(M;\Z)$.}.$$
The manifold $W$ is called a cobordism from $Y_0$ to $Y_1$ with $Y_0$ referred to as the incoming boundary component and $Y_1$ the outcoming boundary component. Moreover, if $W$ is positive (negative) definite, then $W$ is called a positive (negative) definite cobordism.
\end{definition}

The following theorem indroduces the sought after cobordisms:
\begin{theorem}\label{cobordisms}Let $(p,q)$ be relatively prime positive integers and $n>0$ an even integer. If $\Sigma_2(D_n(T_{p,q}))$ is the 2-fold cover of $S^3$ branched over the satellite knot $D_n(T_{p,q})$, then there exist:
\begin{enumerate}[label=(\alph*),ref=Theorem \thetheorem (\alph*)]
\item\label{Z} A negative definite cobordism $Z(n,p,q)$ from $\Sigma_2(D_n(T_{p,q}))$ to $-\Sigma(p,q,npq-1)$. 
\item\label{R} A negative definite cobordism $R(n,p,q)$ from $\Sigma_2(D_n(T_{p,q}))$ to the empty manifold.
\item\label{P} A positive definite cobordism $P(n,p,q)$ from $\Sigma_2(D_n(T_{p,q}))$ to ${-\Sigma(p,q,2npq-1)\sqcup-\Sigma(p,q,2npq-1)}$.
\end{enumerate}
Moreover, these cobordisms have trivial first homology group $H_1(\--;\Z)$.
\end{theorem}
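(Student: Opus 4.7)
The plan is to build each of the three cobordisms as the trace of $2$-handle attachments starting from $\Sigma_2(D_n(T_{p,q})) \times [0,1]$, using the surgery presentation of $\Sigma_2(D_n(T_{p,q}))$ afforded by \autoref{dec_2-fold}. The first step is to convert the decomposition $V_2 \cup_\varphi 2 E(T_{p,q})$ into an honest Kirby diagram in $S^3$: I replace each companion exterior $E(T_{p,q}) = S^3 \setminus N(T_{p,q})$ by its standard $T_{p,q}$-surgery description and extract the surgery coefficients on each component from the gluing formulas $\varphi_\ast(\mu_K)_i = -n\mu_{A_i} + \lambda_{A_i}$ and $\varphi_\ast(\lambda_K)_i = \mu_{A_i}$. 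The outcome is a link in $S^3$ consisting of the two components of the pattern lift $T_{2,-2n}$ linked with two copies of $T_{p,q}$, with framings dictated by $n$.

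For part (\ref{Z}), I would build $Z(n,p,q)$ by attaching a single $2$-handle with negative framing along a curve which, after surgery, realizes $1/n$-surgery on one of the two companion copies of $T_{p,q}$ appearing in the cover. By Moser's classification of Dehn surgeries on torus knots, $1/n$-surgery on $T_{p,q}$ produces the Brieskorn sphere $\Sigma(p,q,npq-1)$; handle slides and blow-downs in the resulting diagram identify the new outgoing boundary component with $-\Sigma(p,q,npq-1)$, where the sign is dictated by the induced orientation of \autoref{def_cob}. Negative definiteness can then be read off directly from the linking matrix of the added $2$-handles. For part (\ref{P}), the cobordism $P(n,p,q)$ is built by the analogous construction carried out symmetrically on \emph{both} companion copies at once, with positive framings; the doubling $n \mapsto 2n$ arises because each companion copy absorbs twice the half-twist count of the pattern once the symmetric resolution disassembles the $T_{2,-2n}$ piece, and positive definiteness is forced by the positive framings.

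Part (\ref{R}) is then obtained by concatenating $Z(n,p,q)$ with the standard negative definite plumbing $4$-manifold bounded by $\Sigma(p,q,npq-1)$ (the negative definite resolution of the corresponding Brieskorn singularity), which caps off the $-\Sigma(p,q,npq-1)$ end and leaves only $\Sigma_2(D_n(T_{p,q}))$ on the boundary. The intersection form of $R(n,p,q)$ is the direct sum of the two negative definite pieces, and $H_1(R(n,p,q);\Z) = 0$ follows from a Mayer--Vietoris argument using the fact that $\Sigma(p,q,npq-1)$ is an integer homology sphere and that both glued pieces have trivial $H_1$. For $Z$ and $P$, the vanishing of $H_1$ is read off the (purely $2$-handle) relative handle decomposition: $H_1$ of the cobordism is a quotient of the finite odd-order group $H_1(\Sigma_2(D_n(T_{p,q}));\Z)$, and a direct computation shows that the boundaries of the attached handles generate this quotient.

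The main obstacle will be the explicit identification, after Kirby calculus, of the resulting surgery diagrams with the standard presentations of $\pm\Sigma(p,q,npq-1)$ and $\pm\Sigma(p,q,2npq-1)$. This requires careful bookkeeping of framings through the two-fold cover of the pattern---especially for $P(n,p,q)$, where the symmetric handle attachments must simultaneously resolve both companion copies while producing the $2n$ twist parameter---together with a vigilant reconciliation of the outgoing-boundary orientation convention from \autoref{def_cob} with Moser's surgery formula on $T_{p,q}$.
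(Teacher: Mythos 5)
Your overall strategy (build the cobordisms as traces of $2$-handle attachments on $I\times\Sigma_2(D_n(T_{p,q}))$, read off definiteness from linking matrices, and invoke Moser) is the paper's strategy, but the two constructions you actually specify for \autoref{Z} and \autoref{P} have genuine gaps. For \autoref{Z}: ``a single $2$-handle with negative framing along a curve which, after surgery, realizes $1/n$-surgery on one of the two companion copies'' is not a construction. The companion $T_{p,q}$ is not embedded in $\Sigma_n=\Sigma_2(D_n(T_{p,q}))$ as a curve with a preferred framing --- only its exterior appears, twice, glued to the exterior of $T_{2,-2n}$ via $\varphi$ --- so ``$1/n$-surgery on a companion copy'' is not the effect of a handle attachment along a knot in $\Sigma_n$, and a single handle cannot in general make the rest of the manifold (the second companion exterior together with the $T_{2,-2n}$ exterior) collapse to a solid torus. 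What the paper does is attach $c$ handles, where $c$ is the number of crossing changes needed to unknot $T_{p,q}$ (generically $c>1$; the unknotting number of $T_{p,q}$ is $(p-1)(q-1)/2$), along $-1$-framed unknots $\gamma_i$ lying in \emph{one} copy of the companion exterior, chosen with $lk(\gamma_i,T_{p,q})=0$. These handles turn that copy into a solid torus; its union with the $T_{2,-2n}$ exterior is again a solid torus because $A_1$ is unknotted; and only then does the boundary become a Dehn filling of the \emph{other} companion exterior, with slope $1/n$ computed from $\varphi_*(\mu_K)=-n\mu_{A_1}+\lambda_{A_1}$, $\varphi_*(\lambda_K)=\mu_{A_1}$. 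The $lk=0$ condition is exactly what sends the Seifert longitude to the meridian of the filling torus (hence the slope $1/n$ rather than something else) and what makes the linking matrix $-I_c$. None of this is recoverable from ``handle slides and blow-downs'' applied to an unspecified single curve.

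For \autoref{P} the gap is of the same kind but worse: you cannot obtain positive definiteness by running the companion-unknotting construction ``with positive framings,'' because the sign of a crossing-change circle is forced by the crossing it must change, and positive torus knots are not unknotted by $+1$-framed blow-downs. The paper's positive definite cobordism uses a different family of curves altogether: the $n$ circles $\gamma^+_i$ encircling the clasp crossings of the lifted pattern $T_{2,-2n}$, attached with framing $+1$; blowing them down unlinks $A_1$ from $A_2$, and the slope on each companion exterior is computed from $\varphi$ together with the twisting introduced by the blow-downs, $\psi^+_*(\lambda_{A_i})=-n\mu_{U_i}+\lambda_{U_i}$, giving $h^+_*(\mu_K+2n\lambda_K)=[\partial D^2]$, i.e.\ slope $1/2n$. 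That computation is where the $2n$ comes from; ``each companion copy absorbs twice the half-twist count'' is not an argument. Two further points you would still need: since the handle curves live in $\Sigma_n$ rather than $S^3$, the framing coefficients in the two manifolds must be compared (the paper does this with the Seifert surfaces $S_i$ built from the disks $D_i$ and two Seifert surfaces of $K$), and the surgered boundary is naturally a connected sum $S^3_{1/2n}(T_{p,q})\# S^3_{1/2n}(T_{p,q})$, so a $3$-handle must be added to reach the disjoint union in the statement. On the positive side, your route to \autoref{R} --- capping the $-\Sigma(p,q,npq-1)$ end of $Z$ with the simply connected negative definite resolution bounded by $\Sigma(p,q,npq-1)$ --- is a correct alternative to the paper's direct construction (which attaches $-1$-framed handles along the clasp circles $\gamma^-_i$, lands on $S^3$, and caps with $B^4$): the orientations match, the union along an integral homology sphere is negative definite with $H_1=0$. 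But as written it inherits the gap in \autoref{Z}.
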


By the handle decomposition theorem, every cobordism with incoming boundary component $Y$ is obtained by attaching handles to $I\times Y$. Requiring the cobordism to be oriented is equivalent to requiring the attaching maps of the 1-handles to preserve orientations. In the case being considered, all the cobordisms will be obtained by attaching 2--handles to the 4--manifold $I\times\Sigma_2(D_n(T_{p,q}))$ along framed knots in $\{1\}\times\Sigma_2(D_n(T_{p,q}))$. To that end, we first recall the precise definition of framings to later compute the relevant ones.
\begin{definition}\label{framing} Let $J$ be a knot in a $\Z$--homology sphere $Y$ and $N(J)$ a tubular neighborhood of $J$ in $Y$. A framing of $J$ is a choice of a simple closed curve $J'$ in the boundary $N(J)$ that wraps once around $J$ in the longitudinal direction. Similarly, the framing coefficient of $J$ is the oriented intersection number of $J'$ and any Seifert surface for $J$ in $Y$.\end{definition}

With the definition of framing at hand, we start with the construction of the cobordism $Z(n,p,q)$.
\begin{proof}[Proof of \autoref{Z}]
Any torus knot $T_{p,q}$ with $(p,q)$ relatively prime and positive integers admits a planar diagram with only positive crossings. This implies that $T_{p,q}$ can be unknotted by a sequence of positive-to-negative crossing changes in such a way that the $i$-th crossing change is obtained by performing $-1$ surgery on $S^3$ along a trivial knot $\gamma_i$ that lies in the complement of $T_{p,q}$ and encloses the crossing. Then, if $c$ is the number of crossings changed and $L=\gamma_i\sqcup\ldots\sqcup\gamma_c$, there exists an isomorphism \begin{equation}\label{iso}\psi:S^3_{-1}(L)\to S^3\end{equation} that identifies the restriction of $T_{p,q}$ to the complement of $L$ with the unknot. Next, notice that since $L$ is contained in $S^3\setminus N(T_{p,q})$, it can be regarded as a subset of $\Sigma_n=\Sigma_2(D_n(T_{p,q}))$. Thus, one can form a 4--manifold $Z$ by attaching 2--handles to $I\times\Sigma_n$ along the framed link $(L,-1)$. Specifically, if $\mathbf{h}_i$ is a 4-dimensional 2-handle, 
$$Z=\left(I\times\Sigma_n\right)\union{L}\left(\mathbf{h}_1\sqcup\ldots\sqcup\mathbf{h}_c\right).$$
It is then a matter of routine to check that the incoming boundary component of $Z$ is the manifold $\Sigma_n$ and its outcoming boundary component, $Y$, is the result of surgery on $\Sigma_n$ along the framed link $(L,-1)$. In what follows, we will first obtain a description of $Y$ as surgery and then we will show that $Z$ is a negative definite manifold.\\

First, using the description of $\Sigma_n$ included in \autoref{dec_2-fold}, $Y$ can be seen to split as the union of $ \left(S^3\setminus N(T_{p,q})\right) \union{\varphi_1} \left(S^3\setminus N(T_{2,-2n})\right)$ and the result of surgery on $S^3\setminus N(T_{p,q})$ along the framed link $(L,-1)$. The restriction of the isomorphism $\psi$ from \autoref{iso} to the latter space shows that surgery on $S^3\setminus N(T_{p,q})$ along the framed link $(L,-1)$ is isomorphic to the unknot complement and therefore isomorphic to a standard solid torus $D^2\times S^1$. Furthermore, choosing $\gamma_i$ to have linking number $0$ with the knot $T_{p,q}$ guarantees that the Seifert longitude of $T_{p,q}$ gets sent to the Seifert longitude of the unknot, and thus to a meridional curve $\partial D^2\times\{\text{pt.}\}$ of $D^2\times S^1$. The aforementioned choice also guarantees that the meridian of $T_{p,q}$ gets sent to the longitudinal curve $\{\text{pt.}\}\times S^1$ of the solid torus $D^2\times S^1$. In other words, if $h$ is the isomorphism between surgery on $S^3\setminus N(T_{p,q})$ and the standard solid torus $D^2\times S^1$, there is an isomorphism $$Y\cong\left(S^3\setminus N(T_{p,q})\right)\union{\varphi_1}\left(S^3\setminus N(T_{2,-2n})\right) \union{\varphi_2\circ h} D^2\times S^1.$$
To simplify notation call $A_1,A_2$ the components of the link $T_{2,-2n}$ and let  $$X=\left(S^3\setminus N(T_{2,-2n})\right)\union{\varphi_2\circ h}D^2\times S^1=\left(S^3\setminus N(A_1\sqcup A_2)\right)\union{\varphi_2\circ h}D^2\times S^1.$$ 
Notice that since the gluing map $\varphi_2\circ h:\partial D^2\times S^1\to \partial N(A_1\sqcup A_2)$ satisfies $$\left(\varphi_2\circ h\right)_*(\left[S^1\right])=(\varphi_2)_*(\mu_K)=-n\mu_{A_2}+\lambda_{A_2}\quad\text{ and }\quad \left(\varphi_2\circ h\right)_*(\left[\partial D^2\right])=(\varphi_2)_*(\lambda_K)=\mu_{A_2},$$ it extends to the interior of $D^2\times S^1$. This implies that $X$ is the result of filling the space left by $N(A_2)$ in $S^3$ with a solid torus in a way that makes $X$ isomorphic to $S^3\setminus N(A_1)$. Then, since $A_1$ is unknotted, $X$ is isomorphic to a standard solid torus and thus $Y$ is isomorphic to the union of $S^3\setminus N(T_{p,q})$ and a solid torus. In other words, $Y$ is the result of performing surgery on $S^3$ along $T_{p,q}$. To make explicit the coefficient of the surgery, recall that $$(\varphi_1)_*(\mu_K)=-n\mu_{A_1}+\lambda_{A_1}\quad\text{ and }\quad(\varphi_1)_*(\lambda_K)=\mu_{A_1}.$$ Then, since $\lambda_{A_1}$ is identified with the meridian $\partial D^2$ and $\mu_{A_1}$ with the longitude $S^1$, simple arithmetic shows that $$(\varphi_1)_*(\mu_K+n\lambda_K)=\left[\partial D^2\right],$$
thus showing that the surgery coefficient is $1/n$. Finally, since for $p,q,n>0$, the result of $1/n$ surgery on $S^3$ along the torus knot $T_{p,q}$ is diffeomorphic to the Seifert fibred homology sphere ${-\Sigma(p,q,npq-1)}$ \cite[Proposition 3.1]{moser}, the outcoming boundary component of $Z$ is $-\Sigma(p,q,npq-1)$ as sought.\\

As for definiteness, since $\Sigma_n$ is a homology sphere, the second homology group $H_2(Z;\Z)$ admits a basis determined by the 2--handles. In addition, the matrix representation of the intersection form of $Z$ in terms of such basis is given by the linking matrix of the framed link $(L,-1)$. This, in turn, can be seen to be the matrix $-I_c$, where $I_c$ is the $c\times c$ identity matrix. We thus see that $Z$ is negative definite as sought.\end{proof}

The remaining statements \autoref{R}, and \autoref{P} will be obtained as a corollary to the following theorem.
\begin{theorem}\label{mine} Let $K$ be any knot and $\Sigma_n$ the 2--fold cover of $S^3$ branched over $D_n(K)$. Then, there exist 4--manifolds $P_n(K)$ and $R_n(K)$ such that
\begin{enumerate}[label=(\alph*),ref=Theorem \thetheorem (\alph*)]
\item $P_n(K)$ is a positive definite cobordism from $\Sigma_n$ to $S^3_{\frac{1}{2n}}(K)\#S^3_{\frac{1}{2n}}(K)$.
\item $R_n(K)$ is a negative definite cobordism from $\Sigma_n$ to $S^3$ \end{enumerate}
\end{theorem}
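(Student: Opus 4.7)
Both cobordisms will be constructed along the same lines as $Z(n,p,q)$ in the proof of \autoref{Z}: I will take $I\times\Sigma_n$ and attach $2$-handles along a carefully chosen framed link in $\{1\}\times\Sigma_n$. Throughout I use the decomposition
\[
\Sigma_n \;=\; X \cup_\varphi \bigl((S^3\setminus N(K))_1 \sqcup (S^3\setminus N(K))_2\bigr)
\]
provided by \autoref{dec_2-fold}, with $X = S^3\setminus N(T_{2,-2n})$, $A_1\sqcup A_2 = T_{2,-2n}$, $T_i = \partial N(A_i)$, and $\varphi$ sending $(\mu_K)_i\mapsto -n\mu_{A_i}+\lambda_{A_i}$ and $(\lambda_K)_i\mapsto \mu_{A_i}$. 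All attaching circles will live inside $X\subset \Sigma_n$, so the outgoing boundary can be analyzed purely by Dehn surgery on $T_{2,-2n}\subset S^3$, keeping $\varphi$ intact and simply re-reading it in the new meridian-longitude coordinates produced by the surgery.

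For $R_n(K)$, I would choose $n$ pairwise unlinked unknots $\eta_1,\dots,\eta_n\subset X$ each satisfying $\mathrm{lk}(\eta_j, A_1)=\mathrm{lk}(\eta_j, A_2)=1$, and attach a $2$-handle along each $\eta_j$ with framing $-1$. By Rolfsen's twist formula this collection of $n$ surgeries raises each Seifert framing of $A_1,A_2$ by $+n$ and raises $\mathrm{lk}(A_1,A_2)$ by $+n$; the first fact gives new Seifert longitudes $\lambda_{A_i'}=\lambda_{A_i}-n\mu_{A_i}$ on $T_i$, and the second converts $T_{2,-2n}$ into the two-component unlink in $S^3$. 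In the new coordinates $\varphi$ now sends $(\mu_K)_i\mapsto \lambda_{A_i'}$ and $(\lambda_K)_i\mapsto \mu_{A_i'}$, which is the standard Dehn filling of $(S^3\setminus N(K))_i$ that returns $S^3$. Since $S^3\setminus(\text{2-unlink})\cong V_1\#V_2$, the two such fillings plus the connect-sum sphere assemble to $S^3\#S^3 = S^3$; the intersection form of the cobordism is the linking matrix $-I_n$ of the framed link $\{(\eta_j,-1)\}$, so $R_n(K)$ is negative definite as required.

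For $P_n(K)$, I would perform the analogous construction but with unknots $\eta_1,\dots,\eta_n\subset X$ satisfying $\mathrm{lk}(\eta_j, A_1)=+1$ and $\mathrm{lk}(\eta_j, A_2)=-1$, now each with framing $+1$. Rolfsen's formula produces framing changes of $-n$ on each of $A_1, A_2$ and still a linking change of $+n$, so the link again becomes the two-component unlink, but the new Seifert longitudes are now $\lambda_{A_i'}=\lambda_{A_i}+n\mu_{A_i}$. A direct substitution shows that
\[
\varphi\bigl((\mu_K+2n\lambda_K)_i\bigr)=-n\mu_{A_i}+\lambda_{A_i}+2n\mu_{A_i}=n\mu_{A_i}+\lambda_{A_i}=\lambda_{A_i'},
\]
which is precisely the meridian of the solid-torus factor $V_i'\subset V_1\# V_2$. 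Consequently each $(S^3\setminus N(K))_i$ is glued to $V_i'$ by the Dehn filling of slope $1/(2n)$, and the outgoing boundary is exactly $S^3_{1/(2n)}(K)\# S^3_{1/(2n)}(K)$; the intersection form $+I_n$ is positive definite.

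The main obstacle will be the careful bookkeeping needed to verify that the Rolfsen-twist calculations, the re-expression of $\varphi$ in the new coordinates, and the identification of the connect-sum factors all come out as claimed. The underlying algebraic identity is that a surgery along an unknot $\eta$ disjoint from $A_1\sqcup A_2$ with $\mathrm{lk}(\eta,A_i)=(a,b)$ and coefficient $1/q$ produces the simultaneous changes $(\Delta f_{A_1},\Delta f_{A_2},\Delta\,\mathrm{lk}(A_1,A_2))=(-qa^2,-qb^2,-qab)$; the two choices $(a,b,q)=(1,1,-1)$ and $(1,-1,+1)$ give per-step increments $(+1,+1,+1)$ and $(-1,-1,+1)$, which accumulated $n$ times deliver exactly the shifts required to reach $S^3$ in one case and $S^3_{1/(2n)}(K)\# S^3_{1/(2n)}(K)$ in the other.
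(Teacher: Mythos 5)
Your construction is essentially the one in the paper: there, $P_n(K)$ and $R_n(K)$ are likewise obtained from $I\times\Sigma_n$ by attaching $n$ two-handles along $(\pm1)$-framed unknots $\boldsymbol\gamma^{\pm}$ contained in the $S^3\setminus N(T_{2,-2n})$ piece of the decomposition of \autoref{dec_2-fold}; the blow-downs turn $T_{2,-2n}$ into the two-component unlink, and the gluing map $\varphi$ is then re-read in the new coordinates to identify the outgoing boundary with $S^3_{1/(n\pm n)}(K)\#S^3_{1/(n\pm n)}(K)$. Your Rolfsen-twist bookkeeping reproduces the paper's computation of the maps $\psi^{\pm}$ and $h_i^{\pm}$, and your slopes $1/2n$ and $1/0$ agree with it. However, one step fails as written: you deduce that the surgered link is the unlink from the fact that the blow-downs change $lk(A_1,A_2)$ from $-n$ to $0$. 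Linking number zero does not imply unlinkedness, and for curves $\eta_j$ prescribed only by the homological data $lk(\eta_j,A_i)=\pm1$ the image of $T_{2,-2n}$ could, for instance, be a Whitehead-type link or have knotted components; in that case the complement does not split as a connected sum of two solid tori and the identification of the outgoing boundary as $S^3_{1/(n\pm n)}(K)\#S^3_{1/(n\pm n)}(K)$ collapses. The fix is to specify the $\eta_j$ geometrically, as in \autoref{my_cob} and \autoref{gamma}: each bounds a disk meeting $T_{2,-2n}$ transversely in one strand of $A_1$ and one strand of $A_2$ at a crossing, so that the blow-downs are literal crossing changes/untwistings producing the unlink; this also settles the existence, inside $X$, of unknotted, pairwise split curves with your prescribed linking data.

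A second point you pass over is that the handles are attached in $\Sigma_n$, while your framings and your linking matrix $\pm I_n$ are computed in $S^3$. Framing coefficients and linking numbers are defined via Seifert surfaces in the ambient homology sphere, so one must verify that the two computations agree. The paper does this just before its proof: since each attaching circle lies in $S^3\setminus N(T_{2,-2n})$, a Seifert surface for it in $\Sigma_n$ is built from the crossing disk by deleting its two meridional subdisks and capping off with the lifted Seifert surfaces of $K$, as in \autoref{seifert_sigma}, and because the pushoffs and the other attaching circles are disjoint from those caps, all relevant intersection numbers coincide with the $S^3$ ones. With these two points supplied, your proposal becomes the paper's proof.
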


The cobordisms will be constructed explicitly from $I\times\Sigma_n$ by attaching some 2-handles to it along framed knots in $\Sigma_n$. Specifically, the attachment will take place along the links $\boldsymbol\gamma^\pm=\gamma^\pm_1\sqcup\ldots\sqcup\gamma^\pm_n$ shown in \autoref{my_cob} and will be completely determined after establishing the appropriate framing  and framing coefficient of the link components. Notice that since $\boldsymbol\gamma^\pm$ is completely contained in $S^3\setminus N(T_{2,-2n})$, any tubular neighborhood $N(\gamma^\pm_i)$ in $S^3$ small enough to be completely contained in $S^3\setminus N(T_{2,-2n})$ is also a tubular neighborhood of $\gamma_i^\pm$ in $\Sigma_n$. \autoref{framing} and the previous statement show that there is no difference between framings of $\boldsymbol\gamma^\pm$ in $S^3$ and $\Sigma_n$. To see that the same holds for framing coefficients we need to analyze the Seifert surfaces for $\gamma_i^\pm$ in both $S^3$ and $\Sigma_n$. First, since $\gamma_i^\pm$ is an unknot in $S^3$, any embedded 2-disk in $S^3$ bounding $\gamma_i^\pm$ is a Seifert surface for $\gamma_i^\pm$ in $S^3$. Call such disk $D_i$ and choose it to be disjoint from every other component of $\boldsymbol\gamma^\pm$. Notice also that each curve $\gamma_i^\pm$ encloses a crossing of $T_{2,-2n}$ in such a way that $D_i$ intersects the boundary of $N(T_{2,-2n})$ in two disjoint curves, one homologous to $-\mu_{A_1}$ and the other to $-\mu_{A_2}$ (See \autoref{gamma}). Next, to obtain a Seifert surface $S_i$ for $\gamma_i^\pm$ in $\Sigma_n$, let $F_j$ be a Seifert surface for $K$ in $S^3$ contained in the $j$-th copy of $S^3\setminus N(K)$ in $\Sigma_n$ and recall that the gluing map $\varphi$ from \autoref{dec_2-fold} identifies $\mu_{A_j}$ with a longitude of $K$ in the $j$-th copy of $S^3\setminus N(K)\subseteq \Sigma$. The surface $F_j$ can then be glued to $D_i$ along $\varphi_j$ and so we can form \begin{equation}\label{seifert_sigma}S_i=D_i\cap S^3\setminus N(T_{2,-2n}) \union{\varphi} (F_1\sqcup F_2).\end{equation} Hence, if $\beta_i$ is any framing of $\gamma_i$, its framing coefficient in $S^3$ is given by the number of points in $\beta_i\cap D_i$ counted with sign and its framing coefficient in $\Sigma_n$ is given by the number of points in $\beta_i\cap S_i$ counted with sign. Since any choice of $\beta_i$ is contained in the interior of $S^3\setminus N(T_{2,-2n})$, it is disjoint from each copy of $S^3\setminus N(K)$ that appears in the description of $\Sigma_n$. Thus, $\beta_i$ is disjoint from both $F_1$ and $F_2$ and so $$\beta_i\cap S_i=\beta_i \cap \left(D_i\cap S^3\setminus N(T_{2,-2n})\right)=\beta_i\cap D_i.$$ This shows that the framing coefficient of $\gamma_i^\pm$ in both $S^3$ and $\Sigma_n$ agree.\\

\begin{figure}[H]
\medskip
\centering
\begin{subfigure}[b]{0.475\textwidth}
\centering
\def\svgwidth{\textwidth}
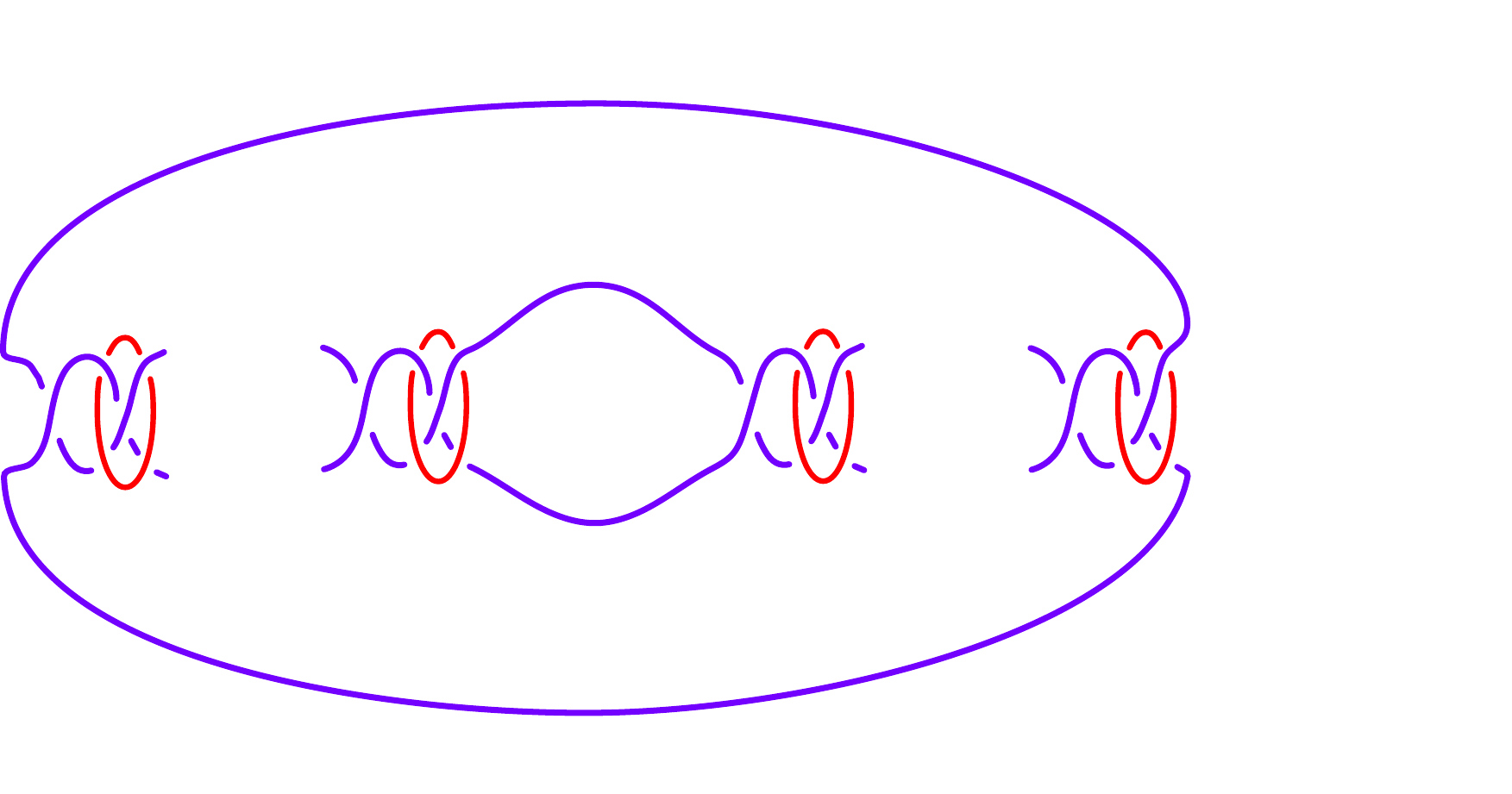
\caption{$\Sigma_n$ and the link $\boldsymbol\gamma^+$.}
\end{subfigure}
\begin{subfigure}[b]{0.475\textwidth}
\centering
\def\svgwidth{\textwidth}
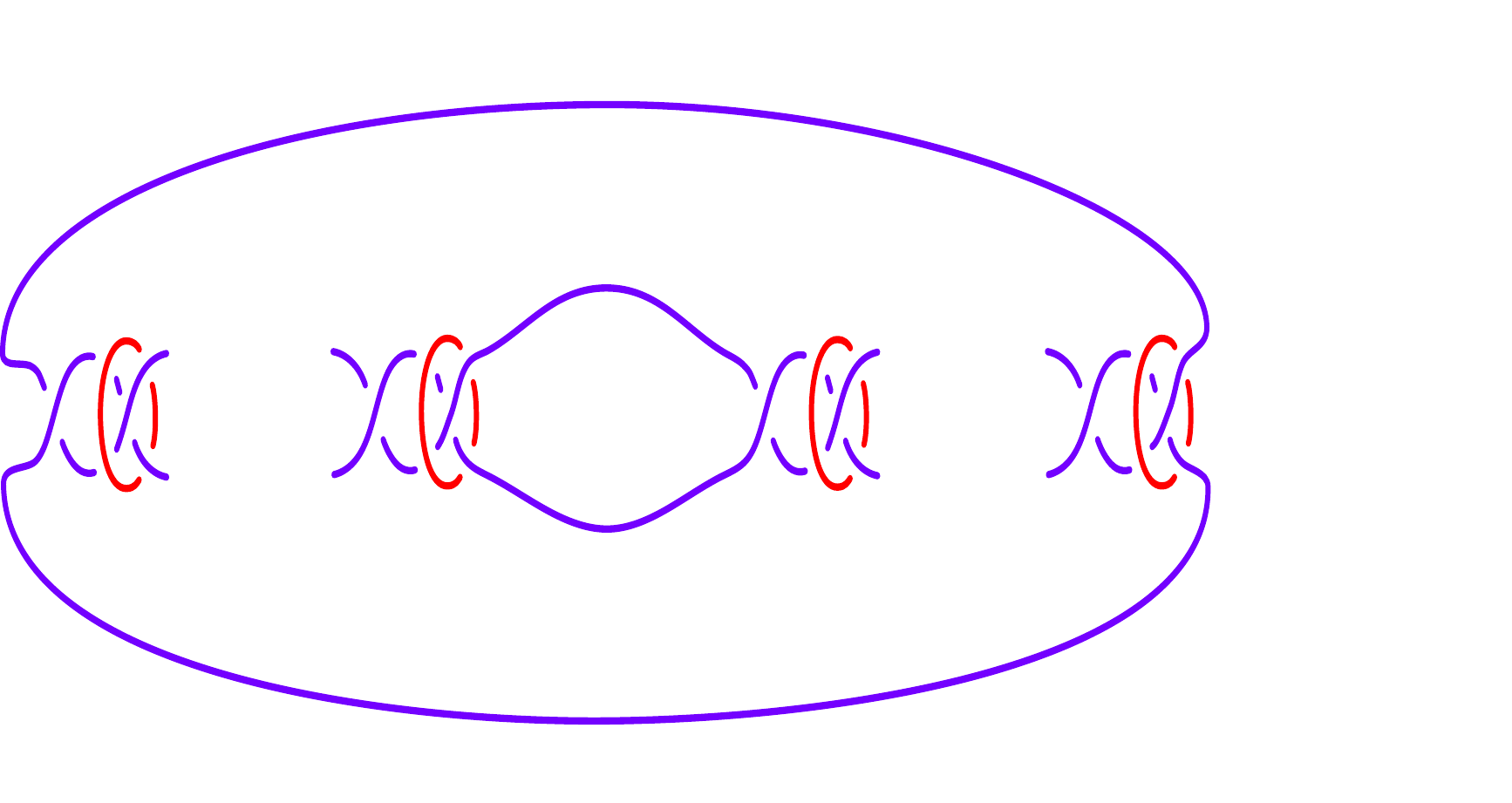
\caption{$\Sigma_n$ and the link $\boldsymbol\gamma^-$.}
\end{subfigure}
\caption{Descriptions of $P_n(K)$ and $R_n(K)$}\label{my_cob}
\end{figure}

So, let $\boldsymbol\gamma^\pm=\gamma^\pm_1\sqcup\ldots\sqcup\gamma^\pm_n$ with the framings as shown in \autoref{my_cob}, and form 
$$P_n(K)=\left(I\times \Sigma_n\right)\union{\boldsymbol\gamma^+}\left(\mathbf{h}_1\sqcup\ldots\sqcup\mathbf{h}_n\right)\qquad\text{and}\qquad R_n(K)=\left(I\times \Sigma_n\right)\union{\boldsymbol\gamma^-}\left(\mathbf{h}_1\sqcup\ldots\sqcup\mathbf{h}_n\right).$$
These two 4--manifolds are the sought after cobordisms, as will be established next.

\begin{proof}[Proof of \autoref{mine}] The boundary of $P_n(K)$ is the disjoint union of $-\Sigma_n$ and $M^+$, the result of surgery on $\Sigma_n$ along the framed link $\boldsymbol\gamma^+$. Analogously, the boundary of $R_n(K)$ is the disjoint union of $-\Sigma_n$ and $M^-$, the result of surgery on $\Sigma_n$ along the framed link $\boldsymbol\gamma^-$. Then, since $\boldsymbol\gamma^\pm$ is a link in $S^3\setminus N(T_{2,-2n})$, the space $M^\pm$ can be expressed as the union of two disjoint copies of $S^3\setminus N(K)$ and surgery on $S^3\setminus N(T_{2,-2n})$ along the framed link $\boldsymbol\gamma^\pm$. The latter manifold can be better understood by first performing the surgery on $S^3$ and then examining the effect such surgery has on $S^3\setminus N(T_{2,-2n})$.\\

Since the surgery is done along unknots with framing $\pm 1$, the result is a space isomorphic to $S^3$. Also, notice that every component of $\boldsymbol\gamma^\pm$ encloses a crossing of the link $T_{2,-2n}$. Then, it is well-known that surgery on $S^3$ along $\boldsymbol\gamma^\pm$ can be interpreted as a sequence of $n$ crossing changes on the link $T_{2,-2n}$ that unlink its components. In other words, there is an isomorphism $$\psi^\pm:\: S^3_{\pm 1}\left(\boldsymbol\gamma^\pm\right)\to S^3$$ that sends the restriction of $T_{2,-2n}$ to the complement of $\boldsymbol\gamma^\pm$, to the 2-component unlink $\mathbf{U}=U_1\sqcup U_2$. Thus, after restricting, $\psi^\pm$ gives us an isomorphism between surgery on $S^3\setminus N(T_{2,-2n})$ along $\boldsymbol\gamma^\pm$ and $S^3\setminus N(\mathbf{U})$. The previous shows that $$M^\pm\cong\left(S^3\setminus N(\mathbf{U})\right)\union{\psi\circ\varphi} 2\left(S^3\setminus N(K)\right).$$ Furthermore, since $\mathbf{U}$ is a 2-component unlink, there exists a 2--sphere $S^2$ that separates $S^3\setminus N(\mathbf{U})$ into $S^3\setminus N(U_1)\#S^3\setminus N(U_1)\cong D^2\times S^1 \# D^2\times S^1$. Then, the same sphere decomposes $M^\pm$ as \begin{equation}\label{outer} M^\pm\cong\left(\left(D^2\times S^1\right)\union{h^\pm_1} \left(S^3\setminus N(K)\right)\right)\#\left(\left(D^2\times S^1\right)\union{h^\pm_2} \left(S^3\setminus N(K)\right)\right).\end{equation}
For simplicity in notation, set $X^\pm=\left(D^2\times S^1\right)\union{h^\pm} \left(S^3\setminus N(K)\right)$ and notice that, being the union of the complement of $K$ and a solid torus, $X^\pm$ is surgery on $S^3$ along $K$. The coefficient of the surgery is given by the homology class of the curve that maps to the meridian $\partial D^2\times\{pt.\}$ of $D^2\times S^1$ under the gluing map $h^\pm$, and so it is important to understand $h^\pm$. This can be done by analyzing the identifications that took place to get \autoref{outer}, and the effect they have on $\mu_k$ and $\lambda_K$. With that in mind, let $\{\mu_{A_i},\lambda_{A_i}\}$ be the meridian-longitude pair of the component $A_i$ of $T_{2,-2n}$, and let $\{\mu_{U_i},\lambda_{U_i}\}$ be the meridian-longitude pair of the component $U_i$ of $\mathbf{U}$. Also, recall that $\varphi$ is such that $$(\varphi)_*(\mu_K)=-n\cdot \mu_{A_i}+\lambda_{A_i}, \quad\text{and}\quad (\varphi)_*(\lambda_K)=\mu_{A_i},$$ and that, since $lk(\gamma^\pm_j,A_i)=1$ and $\psi^\pm$ can be interpreted as a sequence of $n$ crossing changes, then $$\psi^{\pm}_*(\mu_{A_i})=\mu_{U_i}, \quad\text{and}\quad \psi^\pm_*(\lambda_{A_i})=(\mp n)\cdot \mu_{U_i}+\lambda_{U_i}.$$ Similarly, the isomorphism $\theta$ between $S^3\setminus N(U_i)$ and the standard solid torus $D^2\times S^1$ identifies $\mu_{U_i}$ with $l= \left[S^1\right]$ and $\lambda_{U_i}$ with $m=\left[\partial D^2\right]$ so that $$\left(h^\pm_i\right)_*(\mu_K)=m+(-n\mp n)\cdot l\quad\text{ and }\quad
\left(h^\pm_i\right)_*(\lambda_K)=l.$$
Therefore $\left(h^\pm_i\right)_*(\mu_K+(n\pm n)\cdot\lambda_K)=m$ showing that the slope of the surgery is $1/(n\pm n)$. This shows that $$M^+\cong S^3_\frac{1}{2n}(K)\# S^3_\frac{1}{2n}(K)\quad\text{and}\quad M^-\cong S^3_\frac{1}{0}(K)\# S^3_\frac{1}{0}(K)\cong S^3,$$ thus proving that $P_n(K)$ is a cobordism from $\Sigma_n$ to $S^3_\frac{1}{2n}(K)\# S^3_\frac{1}{2n}(K)$, and $R_n(K)$ one from $\Sigma_n$ to $S^3$.\\

To show definiteness, it is enough to understand the intersection form of the 4--manifolds being considered. Let $\{b_1,b_2,\ldots,b_n\}$ be the basis for $H_2(-;\Z)$ determined by the handles. To find a surface that represents $b_j$ consider $S_j$ the Seifert surface for $\gamma_j^+$ in $\Sigma_n$ described in \autoref{seifert_sigma} and push $\text{int}(S_j)$ into the interior of $I\times \Sigma\subset P_n(K)$. Then add the core of the $i$-th handle along $\gamma_j^+$ to obtain a closed surface $\widehat{S_j}$. Next, denote by $Q$ the intersection form of $P_n(K)$. It is well-known that the value of $Q(b_j,b_k)$ is given by the number of points in $\widehat{S_j}\cap \widehat{S_k}$, counted with sign. Then, using \autoref{seifert_sigma} we get
$$\widehat{S_j}\cap \widehat{S_k}=S_j\cap\gamma_k^+=D_j\cap\gamma_k^+.$$
Since the disk $D_j$ is disjoint from every other component of $\boldsymbol{\gamma^+}$, and $\gamma_j^+$ has framing $+1$, the signed number of points in $D_j\cap\gamma_k^+$ is given by the Kronecker delta number $\delta_{ik}$. This shows that the $n\times n$ identity matrix $I_n$ represents the intersection form $Q$ in terms of the basis $\{b_1,b_2,\ldots,b_n\}$, and thus that $P_n(K)$ is a positive definite manifold.\\

The analogous argument applied to $\boldsymbol\gamma^-$ shows that $-I_n$ represents the intersection form of $R_n(K)$ and so that $R_n(K)$ is negative definite.
\end{proof}

\begin{figure}[h]
\medskip
\centering
\begin{subfigure}[b]{0.475\textwidth}
\centering
\begin{tabular}{cc}
\raisebox{0.25em}{\resizebox{!}{3cm}{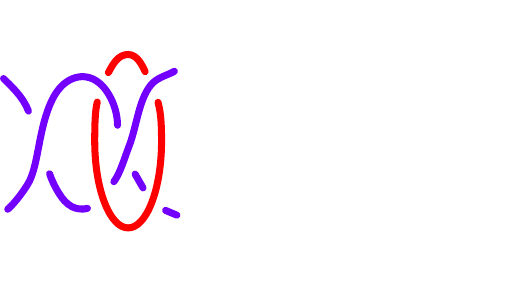}}\hspace*{-3cm}&
\raisebox{2.25em}{\resizebox{!}{1.5cm}{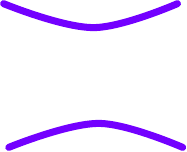}}
\end{tabular}
\caption{One component of $\boldsymbol\gamma^+$ and the corresponding crossing change.}
\end{subfigure}\quad\quad
\begin{subfigure}[b]{0.475\textwidth}
\centering
\begin{tabular}{cc}
\resizebox{5.5cm}{!}{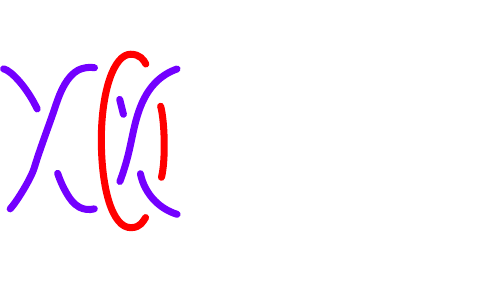}\hspace*{-3cm}&
\raisebox{2.25em}{\resizebox{!}{1.5cm}{\input{strands.pdf_tex}}}
\end{tabular}
\caption{One component of $\boldsymbol\gamma^-$ and the corresponding crossing change.}
\end{subfigure}
\caption{Local depiction of $\gamma^\pm$}\label{crossing}\label{gamma}
\end{figure}

\begin{cor} Let $p,q>0$ and consider the satellite knot $D_n\left(T_{p,q}\right)$. If $\Sigma=\Sigma_2\left(D_n\left(T_{p,q}\right)\right)$ is the 2-fold cover of $S^3$ branched over $D_n\left(T_{p,q}\right)$ then we have the following:
\begin{enumerate}[label=(\alph*),ref=Theorem \thetheorem (\alph*)]
\item\label{pos_torus} There exists a positive definite 4-manifold, $P(n,p,q)$, with boundary components $-\Sigma$ and two copies of $-\Sigma(p,q,2npq-1)$.
\item\label{neg_torus}  There exists a negative definite 4-manifold, $R(n,p,q)$, with boundary $-\Sigma$.
\end{enumerate}
\end{cor}

\begin{proof} First, to construct $P(n,p,q)$ attach a 3--handle to the manifold $P_n(T_{p,q})$ along its outcoming boundary component to transform the connected sum of manifolds into disjoint union. Next, recall that for $p,q,n>0$, the result of $1/2n$ surgery on $S^3$ along the torus knot $T_{p,q}$ is diffeomorphic to the Seifert fibred homology sphere $-\Sigma(p,q,2npq-1)$ \cite[Proposition 3.1]{moser}. \\
Similarly, the manifold $R(n,p,q)$ is obtained from $R_n(T_{p,q})$ by capping off its outcoming boundary component $S^3$ with a 4--ball.
\end{proof}

\section{Main Result}\label{sec::main_result}
\begin{theorem}\label{ind_cob}Let $\left\{\left(p_i,q_i\right)\right\}_i$ be a sequence of relatively prime positive integers and $n_i$ a positive and even integer $(i=1,2,\ldots)$. If $$p_iq_i(2n_ip_iq_i-1)<p_{i+1}q_{i+1}(n_{i+1}p_{i+1}q_{i+1}-1),$$ the family $\mathscr{F}=\left\{\Sigma_2\left(D_{n_i}\left(T_{p_iq_i}\right)\right)\right\}_{i=1}^\infty$ is independent in $\Theta^3_{\Z/2}$.
\end{theorem}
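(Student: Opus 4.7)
My plan is to argue by contradiction via Theorem~\ref{cobound}. Suppose to the contrary that some nontrivial integer linear combination $\sum_{i=1}^N a_i [\Sigma_2(D_{n_i}(T_{p_i,q_i}))] = 0$ holds in $\Theta^3_{\Z/2}$ with $a_N \neq 0$. Negating all coefficients if necessary, I may assume $a_N > 0$. Writing $\Sigma_i = \Sigma_2(D_{n_i}(T_{p_i,q_i}))$ and $\varepsilon_i = \sgn(a_i)$, this yields a $\Z/2$--homology $4$--ball $Q$ whose boundary is the connected sum of $|a_i|$ copies of $\varepsilon_i \Sigma_i$ over all $i$. Attaching a $3$--handle along each splitting $2$--sphere---each nullhomologous in $H_2(Q;\Z/2) = 0$---produces $\widehat{Q}$ with $\partial \widehat{Q} = \bigsqcup_i |a_i| \cdot \varepsilon_i \Sigma_i$ and still $H_1(\widehat{Q};\Z/2) = 0$ and $H_2(\widehat{Q};\Q) = 0$.

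The next step is to attach to each boundary component of $\widehat{Q}$ a negative definite cobordism drawn from Theorem~\ref{cobordisms} that either caps the component off or replaces it by Seifert fibered homology spheres. To each of the $a_N$ copies of $\Sigma_N$, I glue $Z(n_N,p_N,q_N)$, which converts it to $-\Sigma(p_N,q_N,n_N p_N q_N -1)$. To each copy of $\Sigma_i$ with $i<N$ and $a_i>0$, I glue the capping cobordism $R(n_i,p_i,q_i)$. To each copy of $-\Sigma_i$ with $i<N$ and $a_i<0$, I glue the orientation reversal $-P(n_i,p_i,q_i)$: since $P$ is positive definite, $-P$ is negative definite, and its boundary contains $+\Sigma_i$, so the gluing replaces the $-\Sigma_i$ component of $\partial\widehat{Q}$ by two copies of $\Sigma(p_i,q_i,2n_i p_i q_i -1)$. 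Let $X$ denote the resulting $4$--manifold.

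A Mayer--Vietoris argument---using that each attached cobordism has trivial $H_1$ by Theorem~\ref{cobordisms} and that every gluing interface is a $\Z/2$--homology $3$--sphere---gives $H_1(X;\Z/2) = 0$. Since $H_2(\widehat{Q};\Q)=0$, the rational intersection form of $X$ decomposes as an orthogonal direct sum of the intersection forms of the attached cobordisms, each negative definite, so $X$ itself is negative definite. The boundary $\partial X$ is the disjoint union of $a_N$ copies of $-\Sigma(p_N,q_N,n_Np_Nq_N-1)$ together with, for each $i<N$ having $a_i<0$, $2|a_i|$ copies of $\Sigma(p_i,q_i,2n_ip_iq_i-1)$. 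The hypothesis $p_iq_i(2n_ip_iq_i-1)<p_{i+1}q_{i+1}(n_{i+1}p_{i+1}q_{i+1}-1)$ then implies that the products $pq(kpq-1)$ associated to the Seifert summands of $\partial X$, listed in order of the index $i$, form a strictly increasing sequence: for two contributing indices $i<j<N$ one has $p_iq_i(2n_ip_iq_i-1)<p_{i+1}q_{i+1}(n_{i+1}p_{i+1}q_{i+1}-1)\le p_jq_j(2n_jp_jq_j-1)$, and an analogous chain works when $j=N$. Therefore Theorem~\ref{cobound} forbids the existence of $X$, producing the desired contradiction.

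The most delicate aspect is the sign bookkeeping: for $i<N$ with $a_i<0$, the only negative definite piece available to fill the $-\Sigma_i$ boundary component is the orientation reversal of the positive definite cobordism $P$, and this reversal introduces two Seifert spheres with the \emph{doubled} third parameter $2n_ip_iq_i-1$ rather than $n_ip_iq_i-1$. The hypothesis of the theorem is calibrated exactly to this doubling, which is why the chain inequality appears with the asymmetric constants ``$2n_i$ versus $n_{i+1}$.'' The remaining technical points---that the $3$--handle attachments preserve $H_1(\cdot;\Z/2)=0$ and that the rational intersection form vanishes on the $\widehat{Q}$ piece---are routine consequences of $Q$ being a $\Z/2$--homology ball.
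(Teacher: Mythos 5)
Your overall strategy is exactly the paper's: pass to a disjoint-union boundary by attaching $3$--handles, augment $Q$ by the cobordisms of \autoref{cobordisms}, verify $H_1(X;\Z/2)=0$ by Mayer--Vietoris and negative definiteness by the orthogonal splitting of the rational intersection form, and then invoke \autoref{cobound}; your chain-inequality check and the sign bookkeeping for $-P$ also match the paper. The one substantive deviation is where you attach $Z(n_N,p_N,q_N)$: you glue it to \emph{every} one of the $a_N$ copies of $\Sigma_2\left(D_{n_N}(T_{p_Nq_N})\right)$, so that when $a_N\geq 2$ the boundary of your $X$ contains $a_N$ copies of the top Seifert sphere $\Sigma(p_N,q_N,n_Np_Nq_N-1)$. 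This creates a genuine gap. \autoref{cobound} is only a topological restatement of \autoref{moduli}, and the compactness statement \autoref{moduli_compact} rests on the strict inequality $p_1(E,\boldsymbol\alpha)=\tfrac{1}{p_Nq_N(n_Np_Nq_N-1)}<\tau(Y)$ for every boundary component $Y$ of $M$ other than the one capped off by $W$. Repeated copies of the \emph{smaller} spheres $\Sigma(p_i,q_i,2n_ip_iq_i-1)$ are harmless, since each satisfies the strict inequality by your chain argument; but any additional copy of $\pm\Sigma(p_N,q_N,n_Np_Nq_N-1)$ left in $\partial X$ has $\tau$ equal to (or, for the reversed orientation, not known to exceed) $p_1(E,\boldsymbol\alpha)$, which is precisely the borderline case the criterion excludes: breaking along that cylindrical end cannot be ruled out, so the obstruction behind \autoref{cobound} does not apply to your boundary configuration without further argument.

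The fix is what the paper actually does, and it costs nothing: since $c_N\geq 1$ and all copies of $\Sigma_2\left(D_{n_N}(T_{p_Nq_N})\right)$ occur with positive orientation, glue $Z(n_N,p_N,q_N)$ to exactly \emph{one} of them, and cap every remaining positively-oriented boundary component---including the other $a_N-1$ copies of the top cover---with the negative definite cobordism $R(n_i,p_i,q_i)$ of \autoref{R}. Then $\partial X$ contains $\Sigma(p_N,q_N,n_Np_Nq_N-1)$ exactly once, together with copies of the strictly smaller $\Sigma(p_i,q_i,2n_ip_iq_i-1)$ coming from the indices with $a_i<0$, and the rest of your argument (the $3$--handle step, the Mayer--Vietoris computation, the definiteness argument, and the verification of the increasing-product hypothesis) goes through verbatim.
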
 

\begin{proof} Denote by $[Y]$ the homology cobordism class of the $\Z/2$--homology sphere $Y$ and suppose by contradiction that there exist integral coefficients $c_1,\ldots,c_N\in\Z$ such that $$\sum_{i=1}^Nc_i\left[\Sigma_2\left(D_n\left(T_{p_iq_i}\right)\right)\right]=0$$ in $\Theta^3_{\Z/2}$. The supposition implies the existence of an oriented 4-manifold $Q$ with the $\Z/2$ homology of a punctured $4$--ball and with boundary
$$\partial Q=\bigsharpp_{i=1}^N\left(\overset{c_i}{\underset{j=1}{\#}}\Sigma_2\left(D_n\left(T_{p_iq_i}\right)\right)\right).$$
Attaching 3--handles to $Q$ we can further assume that $$\partial Q=\bigsqcup_{i=1}^Nc_i\Sigma_2\left(D_n\left(T_{p_iq_i}\right)\right).$$
Here we use $cY$ to denote the disjoint union of $c$ copies of $Y$ if $c>0$, and $-c$ copies of $-Y$ if $c<0$. In addition, and without loss of generality, further assume that $c_N\geq 1$. Augment $Q$ using the cobordisms constructed in \autoref{cobordisms}, namely, let $$X=Q\cup\bigg(Z(n_N,p_N,q_N)\bigg)\cup\left(\bigsqcup_{c_i>0} R(n_i,p_i,q_i)\right)\cup\left(\bigsqcup_{c_i<0} -P(n_i,p_i,q_i)\right).$$
Recall that $Z(n,p,q)$, $-P(n,p,q)$ and $R(n,p,q)$ are negative definite cobordisms from $\Sigma$ to $-\Sigma(p,q,npq-1)$, $2\left(\Sigma(p,q,2npq-1)\right)$, and the empty set respectively. Thus, $X$ is a negative definite 4--manifold with oriented boundary $$\partial X=\bigg(-\Sigma(p_N,q_N,n_Np_Nq_N-1)\bigg)\sqcup \left(\bigsqcup_{c_i<0}2\Sigma(p_i,q_i,2n_ip_iq_i-1)\right).$$
Additionally, since the first $\Z/2$--homology groups of $Z(n,p,q)$, $-P(n,p,q)$, $R(n,p,q)$, and $Q$ are trivial, the Mayer-Vietoris theorem shows that $H_1(X,\Z/2)=0$. This would imply that the Seifert fibered spaces $\left\{-\Sigma(p_N,q_N,n_Np_Nq_N-1)\right\}\cup \left\{\Sigma(p_i,q_i,2n_ip_iq_i-1)\right\}_{c_i<0}$ cobound a smooth 4--manifold that has negative definite intersection form and that satisfies $H_1(X,\Z/2)=0$, contradicting \autoref{cobound}. Therefore, $Q$ cannot exist and so the 3-manifolds $\Sigma_2\left(D_{n_i}\left(T_{p_iq_i}\right)\right)$ are independent in the $\Z/2$ homology cobordism group.
\end{proof}

\begin{figure}[H]
\medskip
\centering
\includegraphics[width=0.375\textwidth]{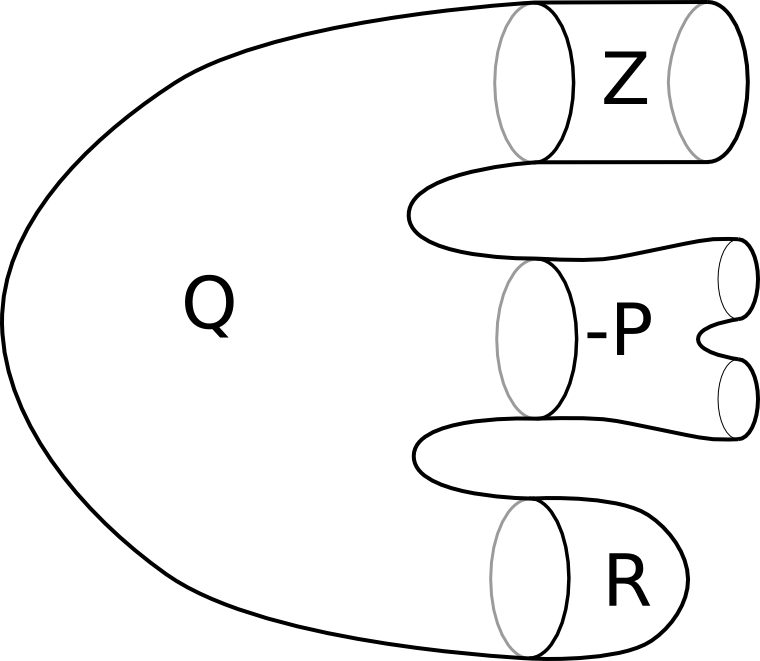}
\caption{The manifold $X$.}
\bigskip
\end{figure}

\begin{theorem}\label{main_result} Let $\left\{\left(p_i,q_i\right)\right\}_i$ be a sequence of relatively prime positive integers and $n_i$ a positive and even integer $(i=1,2,\ldots)$. Then, if $$p_iq_i(2n_ip_iq_i-1)<p_{i+1}q_{i+1}(n_{i+1}p_{i+1}q_{i+1}-1),$$ the collection $\left\{D_{n_i}\left(T_{p_iq_i}\right)\right\}_{i=1}^\infty$ is an independent family in $\Conc_\infty$.
\end{theorem}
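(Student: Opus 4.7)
The plan is to deduce \autoref{main_result} as a direct corollary of \autoref{ind_cob} via the group homomorphism $\Sigma_2\colon \Conc_\infty \to \Theta^3_{\Z/2}$ recorded in the discussion immediately following \autoref{2-fold_conc}. Recall that this homomorphism sends the smooth concordance class of a knot $K$ to the $\Z/2$-homology cobordism class of its 2-fold branched cover $\Sigma_2(K)$; it is well-defined by part (b) of \autoref{2-fold_conc} (a slice knot has a 2-fold cover bounding a $\Z/2$-homology 4-ball) and additive because $\Sigma_2(K_1 \# K_2) = \Sigma_2(K_1)\# \Sigma_2(K_2)$.

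First I would argue by contradiction: suppose the collection $\{D_{n_i}(T_{p_i,q_i})\}_{i=1}^\infty$ is not independent in $\Conc_\infty$. Then some finite subfamily admits a nontrivial integer relation
$$\sum_{i=1}^N c_i\,[D_{n_i}(T_{p_i,q_i})] = 0 \quad \text{in } \Conc_\infty,$$
with not all $c_i$ equal to zero. Next, applying the homomorphism $\Sigma_2$ to both sides and using its additivity, this relation passes to
$$\sum_{i=1}^N c_i\,[\Sigma_2(D_{n_i}(T_{p_i,q_i}))] = 0 \quad \text{in } \Theta^3_{\Z/2}.$$

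Finally, since the numerical hypothesis $p_iq_i(2n_ip_iq_i-1) < p_{i+1}q_{i+1}(n_{i+1}p_{i+1}q_{i+1}-1)$ is precisely the hypothesis of \autoref{ind_cob}, the relation displayed above contradicts the independence of $\{\Sigma_2(D_{n_i}(T_{p_i,q_i}))\}_{i=1}^\infty$ in $\Theta^3_{\Z/2}$. Hence all $c_i$ must vanish, which establishes the desired independence in $\Conc_\infty$.

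There is essentially no obstacle at this stage: all the substantive work has already been carried out. The gauge-theoretic input lives in \autoref{cobound}; the topological input consists of the decomposition of the branched covers from \autoref{dec_2-fold} and the three definite cobordisms built in \autoref{cobordisms}; and these are assembled in the proof of \autoref{ind_cob} to rule out any nontrivial $\Z$-linear relation among the $[\Sigma_2(D_{n_i}(T_{p_i,q_i}))]$. What remains here is the purely formal step of transporting that independence back across the homomorphism $\Sigma_2$, for which the only ingredient needed is the well-definedness and additivity of $\Sigma_2$ stated in \autoref{2-fold_conc}.
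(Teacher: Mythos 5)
Your proposal is correct and follows essentially the same route as the paper: the paper likewise assumes a nontrivial linear combination $c_1D_{n_1}(T_{p_1q_1})\#\ldots\#c_ND_{n_N}(T_{p_Nq_N})$ is slice, applies \autoref{2-fold_conc} to conclude the corresponding connected sum of 2-fold branched covers bounds a $\Z/2$--homology ball, and then invokes \autoref{ind_cob} to rule that out. Your phrasing through the homomorphism $\Sigma_2\colon \Conc_\infty \to \Theta^3_{\Z/2}$ is just a slightly more formal packaging of the identical argument.
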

\begin{proof} If $c_1D_{n_1}\left(T_{p_1q_1}\right)\#c_2D_{n_2}\left(T_{p_2q_2}\right)\#\ldots\#c_ND_{n_N}\left(T_{p_Nq_N}\right)$ is slice for some integral coefficients $c_1,\ldots,c_N\in\Z$, then \autoref{2-fold_conc} shows that $$\Sigma_2\left(c_1D_{n_1}\left(T_{p_1q_1}\right)\#\ldots\#c_ND_{n_N}\left(T_{p_Nq_N}\right)\right)=c_1\Sigma_2\left(D_{n_1}\left(T_{p_1q_1}\right)\right)\#\ldots\#c_N\Sigma_2\left(D_{n_N}\left(T_{p_Nq_N}\right)\right)$$ is the boundary of a $\Z/2$--homology ball $Q$. However, \autoref{ind_cob} shows that $Q$ does not exist and the result thus follows.\end{proof}

\bibliographystyle{abbrvnat}
\bibliography{References}

\begin{thebibliography}{19}
\providecommand{\natexlab}[1]{#1}
\providecommand{\url}[1]{\texttt{#1}}
\expandafter\ifx\csname urlstyle\endcsname\relax
  \providecommand{\doi}[1]{doi: #1}\else
  \providecommand{\doi}{doi: \begingroup \urlstyle{rm}\Url}\fi

\bibitem[Akbulut(1983)]{akbulut}
S.~Akbulut.
\newblock Observation.
\newblock CBMS conference at Santa Barbara, 1983.

\bibitem[Casson and Gordon(1986)]{casson-gordon-cob}
A.~Casson and C.~Gordon.
\newblock Cobordism of classical knots.
\newblock \emph{Progr. Math}, 62:\penalty0 181--199, 1986.

\bibitem[Cochran and Gompf(1988)]{cochran-gompf}
T.~D. Cochran and R.~Gompf.
\newblock Applications of \text{D}onaldson's theorems to classical knot
  concordance, homology 3-spheres and \text{P}roperty \text{P}.
\newblock \emph{Topology}, 27\penalty0 (4):\penalty0 495--512, 1988.

\bibitem[Donaldson(2002)]{donaldson-book}
S.~K. Donaldson.
\newblock \emph{Floer homology groups in {Y}ang-{M}ills theory}, volume 147 of
  \emph{Cambridge Tracts in Mathematics}.
\newblock Cambridge University Press, Cambridge, 2002.
\newblock With the assistance of M. Furuta and D. Kotschick.

\bibitem[Fintushel and Stern(1985)]{FS-pseudofree}
R.~Fintushel and R.~J. Stern.
\newblock Pseudofree orbifolds.
\newblock \emph{Annals of Mathematics}, 122\penalty0 (2):\penalty0 335--364,
  1985.

\bibitem[Fintushel and Stern(1990)]{FS-inst}
R.~Fintushel and R.~J. Stern.
\newblock Instanton homology of seifert fibred homology three spheres.
\newblock \emph{Proceedings of the London Mathematical Society}, 3\penalty0
  (1):\penalty0 109--137, 1990.

\bibitem[Freedman(1982)]{freedman2}
M.~H. Freedman.
\newblock The topology of four-dimensional manifolds.
\newblock \emph{Journal of Differential Geometry}, 17\penalty0 (3):\penalty0
  357--453, 1982.

\bibitem[Freedman(1983)]{freedman}
M.~H. Freedman.
\newblock The disk theorem for four-dimensional manifolds.
\newblock In \emph{Proceedings of the International Congress of
  Mathematicians}, volume~1, page~2, 1983.

\bibitem[Freedman and Quinn(1990)]{freedman-quinn}
M.~H. Freedman and F.~Quinn.
\newblock \emph{Topology of 4-manifolds}, volume~39.
\newblock Princeton University Press Princeton, 1990.

\bibitem[Furuta(1990)]{furuta}
M.~Furuta.
\newblock Homology cobordism group of homology 3-spheres.
\newblock \emph{Inventiones Mathematicae}, 100\penalty0 (1):\penalty0 339--355,
  1990.

\bibitem[Gompf and Stipsicz(1999)]{gompf-stip}
R.~Gompf and A.~Stipsicz.
\newblock \emph{4-manifolds and Kirby calculus}, volume~20.
\newblock American Mathematical Soc., 1999.

\bibitem[Hedden and Kirk(2011)]{hedden-kirk-cob}
M.~Hedden and P.~Kirk.
\newblock Chern-{S}imons invariants, {${\rm SO}(3)$} instantons, and {$\Bbb
  Z/2$} homology cobordism.
\newblock In \emph{Chern-{S}imons gauge theory: 20 years after}, volume~50 of
  \emph{AMS/IP Stud. Adv. Math.}, pages 83--114. Amer. Math. Soc., Providence,
  RI, 2011.

\bibitem[Hedden and Kirk(2012)]{hedden-kirk}
M.~Hedden and P.~Kirk.
\newblock Instantons, concordance, and whitehead doubling.
\newblock \emph{Journal of Differential Geometry}, 91\penalty0 (2):\penalty0
  281--319, 2012.

\bibitem[Livingston and Melvin(1985)]{livingston-melvin}
C.~Livingston and P.~Melvin.
\newblock Abelian invariants of satellite knots.
\newblock In \emph{Geometry and topology ({C}ollege {P}ark, {M}d., 1983/84)},
  volume 1167 of \emph{Lecture Notes in Math.}, pages 217--227. Springer,
  Berlin, 1985.

\bibitem[Moser(1971)]{moser}
L.~Moser.
\newblock Elementary surgery along a torus knot.
\newblock \emph{Pacific Journal of Mathematics}, 38\penalty0 (3):\penalty0
  737--745, 1971.

\bibitem[Neumann and Zagier(1985)]{nz}
W.~D. Neumann and D.~Zagier.
\newblock A note on an invariant of {F}intushel and {S}tern.
\newblock In \emph{Geometry and topology ({C}ollege {P}ark, {M}d., 1983/84)},
  volume 1167 of \emph{Lecture Notes in Math.}, pages 241--244. Springer,
  Berlin, 1985.

\bibitem[Seifert(1950)]{seifert-inv}
H.~Seifert.
\newblock On the homology invariants of knots.
\newblock \emph{Quart. J. Math., Oxford Ser. (2)}, 1:\penalty0 23--32, 1950.

\bibitem[Taubes(1987)]{Taubes-periodic}
C.~H. Taubes.
\newblock Gauge theory on asymptotically periodic {$4$}-manifolds.
\newblock \emph{J. Differential Geom.}, 25\penalty0 (3):\penalty0 363--430,
  1987.

\bibitem[Uhlenbeck(1982)]{uhlenbeck-rem}
K.~K. Uhlenbeck.
\newblock Removable singularities in {Y}ang-{M}ills fields.
\newblock \emph{Comm. Math. Phys.}, 83\penalty0 (1):\penalty0 11--29, 1982.

\end{thebibliography}

\end{document}